\documentclass[a4paper, 12pt]{amsart}
\usepackage{amsfonts, amsthm, amssymb, amsmath, stmaryrd}
\usepackage{mathrsfs,array}
\usepackage{xy}
\usepackage{verbatim}
\usepackage{amscd} 
\usepackage{tikz}
\usepackage{tikz-cd}
\usetikzlibrary{matrix,arrows,decorations.pathmorphing}
\usepackage{textcomp}
\input xy
\xyoption{all}
\usepackage[unicode]{hyperref}

\setlength{\textwidth}{6.5in}
\setlength{\oddsidemargin}{-0.1in}
\setlength{\evensidemargin}{-0.1in}

\setcounter{tocdepth}{2}

\numberwithin{equation}{subsection}

\newtheorem{thm}{Theorem}[section]

\newtheorem{cor}[thm]{Corollary}

\newtheorem{lem}[thm]{Lemma} 
\newtheorem{prop}[thm]{Proposition}
 \theoremstyle{definition}
 \theoremstyle{definition}
\newtheorem{defn}[thm]{Definition} \theoremstyle{remark}
\newtheorem{rem}[thm]{\bf Remark}

\newtheorem{exa}[thm]{\bf Example}

\DeclareMathOperator{\Hom}{Hom}
\DeclareMathOperator{\End}{End}

\DeclareMathOperator{\Spec}{Spec}
\DeclareMathOperator{\Spm}{Spm}
\DeclareMathOperator{\Spf}{Spf}

\DeclareMathOperator{\Gal}{Gal}

\DeclareMathOperator{\Frob}{Frob}

\def\rig{\mathrm{rig}}

\def\HT{\mathrm{HT}}

\def\ps{\mathrm{ps}}

\def\an{\mathrm{an}}

\newcommand{\Z}{\mathbb{Z}}
\newcommand{\F}{\mathbb{F}}
\newcommand{\Q}{\mathbb{Q}}
\newcommand{\R}{\mathbb{R}}
\newcommand{\T}{\mathbb{T}}
\newcommand{\A}{\mathbb{A}}
\newcommand{\bC}{\mathbb{C}}

\newcommand{\GL}{\mathrm{GL}}

\newcommand{\Fl}{{\mathscr{F}\!\ell}}

\newcommand{\cO}{\mathcal{O}}

\newcommand{\km}{\mathfrak{m}}

\newcommand{\sC}{\mathscr{C}}

\newcommand{\RNum}[1]{\uppercase\expandafter{\romannumeral #1\relax}}

\newcommand{\overbar}[1]{\mkern 1.5mu\overline{\mkern-1.5mu#1\mkern-1.5mu}\mkern 1.5mu}

\begin{document}

\title{A note on some $p$-adic analytic Hecke actions}
\author{Lue Pan}
\address{Department of Mathematics, Princeton University, Fine Hall, Washington Road,
Princeton NJ 08544}
\email{lpan@princeton.edu}
\begin{abstract} 
We show that the action of Hecke operators away from $p$ on the space of ($p$-adic) overconvergent modular forms is ($p$-adically) locally analytic in a certain sense. As a corollary, the action of the Hecke algebra can be extended naturally to an action of rigid functions on its generic fiber. This directly determines the Hodge-Tate-Sen weights of Galois representation associated to an overconvergent eigenform and confirms a conjecture of Gouv\^{e}a.
\end{abstract}

\maketitle

%\tableofcontents

\section{Introduction}
The notion of $p$-adic modular forms was introduced by Serre in the study of congruences between modular forms. It is well-known that to get a better spectral theory of the $U_p$-operator, one should consider the subspace of \textit{overconvergent} modular forms, on which $U_p$ acts completely continuously. In this short note, we will show that Hecke operators away from $p$ also have a better convergence when acting on overconvergent modular forms. As a consequence, we deduce that the action of the (big) Hecke algebra $\T$ naturally extends to an action of the rigid functions on its generic fiber (denoted by $\T^{\rig}$ by some people). Since having a Hodge-Tate-Sen weight $0$ is a Zariski-closed property on $\Spec\T^{\rig}$, the density of classical points implies directly that 

\begin{thm}[Corollary \ref{HTS}]
The two dimensional semi-simple Galois representation associated to an overconvergent eigenform of weight $k\in\Z$ has Hodge-Tate-Sen weights $0,k-1$.
\end{thm}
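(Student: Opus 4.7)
The plan is to combine three ingredients: the extension of the Hecke action from $\T$ to $\T^{\rig}$ established earlier in the paper; the existence of a universal two-dimensional pseudo-representation $\rho \colon G_{\Q} \to \T^{\rig}$ obtained by interpolating the Galois representations attached to classical eigenforms; and Sen's theory applied to this rigid-analytic family. From these the corollary should follow by a standard Zariski-density argument on $\Spec \T^{\rig}$.

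First, I would restrict $\rho$ to the decomposition group at $p$ and extract the associated Sen operator. Since everything varies rigid-analytically on $\Spec \T^{\rig}$, one obtains a Sen polynomial $P_{\mathrm{Sen}}(X) = X^2 - aX + b$ whose coefficients $a, b$ are global rigid-analytic functions on $\Spec \T^{\rig}$. In particular the locus $\{b = 0\}$, where $0$ is a Sen weight, is Zariski closed.

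Second, classical cuspidal eigenforms of weight $\geq 2$ yield a Zariski-dense set of points in $\Spec \T^{\rig}$ (a standard fact in the theory of eigenvarieties). At any such classical weight-$k$ point the associated Galois representation has Hodge--Tate weights $\{0, k-1\}$ by Deligne--Faltings, and thus $P_{\mathrm{Sen}}$ specialises to $X(X - (k-1))$ there. Hence $b$ vanishes on a Zariski-dense subset of $\Spec \T^{\rig}$ and therefore identically, so one Sen weight is always $0$. For the other Sen weight, which equals the trace $a$, I would combine the weight map $\Spec \T^{\rig} \to \mathcal{W}$ with the equality $a = k - 1$ holding at classical weight-$k$ points: the rigid-analytic function $a - (\kappa - 1)$ on $\Spec \T^{\rig}$, where $\kappa$ denotes the universal weight character appropriately normalised, vanishes on classical points and hence everywhere. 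Specialising at any overconvergent eigenform of integer weight $k$ then gives Sen weights $\{0, k-1\}$.

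The main technical obstacle is essentially packaged into ``established earlier in the paper'': one needs the $p$-adic local analyticity of the prime-to-$p$ Hecke operators in order to make sense of $\T^{\rig}$ acting on overconvergent forms in the first place, and one must check that the $\T$-valued pseudo-representation extends continuously to $\T^{\rig}$ and that Sen's construction behaves well in the resulting rigid-analytic family. Granted these inputs, the corollary is a formal consequence of Zariski-density of classical points together with the rigid-analyticity of the Sen polynomial.
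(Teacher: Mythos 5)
Your outline follows the same global strategy as the paper---extend the prime-to-$p$ Hecke action to rigid functions, produce a Sen polynomial varying over the resulting rigid space, and specialise using density of classical points---but two of the steps you treat as routine are exactly where the substance lies, and one necessary step is missing entirely. First, Sen's theory attaches a polynomial to a genuine family of representations, not to a pseudo-representation valued in the Banach algebra $\mathcal{R}$ (your $\T^{\rig}$), and in general the universal determinant here need not lift to a representation $G_{\Q,S}\to\GL_2(\mathcal{R})$. The paper's Lemma \ref{uSen} deals with this by lifting the pseudo-representation on the charts $\mathcal{R}[\frac{x_1}{x_i},\ldots,\frac{x_r}{x_i}]$ of the blowup along the reducibility ideal, glueing the resulting Sen polynomials over the irreducible locus, and extending the bounded coefficients across the reducible locus using normality and Bartenwerfer's theorem; your clause that ``Sen's construction behaves well in the resulting rigid-analytic family'' conceals all of this. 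Second, Zariski-density of classical points is not a quotable eigenvariety fact in this setting: there is no $U_p$-operator, no Fredholm theory and no weight map, and the density statement (Lemma \ref{dense}) has to be proved by a further application of the fake-Hasse invariants (a power $\bar{s}_{1,i}^{\,l}$ lifts to a genuine global section, and multiplication by its $p^k$-th powers asymptotically commutes with the Hecke action, reducing injectivity to the finite-dimensional spaces of classical forms). For the same reason your treatment of the second Sen weight via a ``universal weight character $\kappa$'' presupposes a weight map that does not exist here; the paper instead observes that the determinant of the Galois representation attached to a weight-$k$ overconvergent eigenform has Hodge--Tate weight $k-1$, so once one weight is $0$ the other is forced to be $k-1$.

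The missing step is the passage from the spaces on which local analyticity is actually proved to arbitrary overconvergent eigenforms. Theorem \ref{mT} only concerns the distinguished affinoids $V_{K_p,i}$ coming from the standard cover of $\Fl$, whereas an overconvergent form of weight $k$ is by definition a section over some small $V\in\mathcal{V}_{K_p}$, i.e.\ over the preimage of an arbitrarily small neighbourhood of $\infty\in\Fl$. The paper's proof of Corollary \ref{HTS} closes this gap by showing $M^{\dagger}_k(K^p)=\bigcup_{n\in\Z}\begin{pmatrix} p^n& 0\\ 0 & 1\end{pmatrix}\cdot M_2$: the matrices $\begin{pmatrix} p^n& 0\\ 0 & 1\end{pmatrix}$ contract neighbourhoods of $\infty$ under $\pi_{\HT}$ and commute with the prime-to-$p$ Hecke operators, so every overconvergent eigensystem already occurs in some $H^0(V_{K',2},\omega_{K^pK'}^{\otimes k})$, where Theorem \ref{mtS} applies. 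Without this reduction your argument establishes the conclusion only for eigenforms living on the fixed affinoids $V_{K_p,i}$, not for all overconvergent eigenforms.
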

This confirms a conjecture of Gouv\^{e}a \cite[Conjecture 4]{Gou88}. We remark that this result was recently obtained by myself in \cite{Pan20} and by Sean Howe independently in \cite{Howe20} (when $k\neq 1$), by relating overconvergent modular forms with completed cohomology. Our method here is more straightforward. Hopefully it will be clear to the readers that the argument can be easily generalized to other contexts.

This note is organized as follows. We will first introduce a class of actions of algebras on a $p$-adic Banach space called \textit{locally analytic action} and give several (simple) examples. Then using fake-Hasse invariants introduced by Scholze \cite{Sch15}, we show that the action of the Hecke algebra on the space of overconvergent modular forms (with fixed radius) is locally analytic. As suggested by Matthew Emerton, this also reproves a result of Calegari-Emerton. At the end, we also discuss a similar phenomenon in the context of locally analytic vectors of completed cohomology.

\subsection*{Acknowledgement} 
I would like to thank Matthew Emerton for his comments on an earlier draft of this note and the  anonymous referee for many helpful comments on this paper.

\section{Locally analytic action}
\begin{defn}
Let $W$ be a $p$-adic Banach space over $\Q_p$. A continuous linear operator $T\in\End(W)$  is called \textit{locally analytic} if  there exists a monic polynomial $f(X)\in\Z_p[X]$ such that $f(T)(W^o)\subset pW^o$, where $W^o$ denotes the unit ball of $W$.
\end{defn}

Note that for a locally analytic operator $T$, if $W^o$ is $T$-stable, then the image of $T$ in $\End(W^o/pW^o)$ generates a finite $\F_p$-algebra.

\begin{exa}
Suppose $W$ is a finite dimensional vector space over $\Q_p$. Then any linear operator  of norm $\leq 1$ is locally analytic by considering its characteristic polynomial.
\end{exa}

\begin{exa}
Suppose $W=\Q_p\langle X\rangle$, the ($p$-adic) completion of $\Q_p[X]$ with respect to the unit ball $\Z_p[X]$. Let $T\in\End(\Q_p\langle X\rangle)$ be the translation $X\mapsto X+1$. It is locally analytic because $(T^p-1)\cdot F(X)=F(X+p)-F(X)\in p\Z_p[X]$ for any $F(X)\in\Z_p[X]$.
\end{exa}

Recall that an operator $T$ on $W$ is called topologically nilpotent if  $\displaystyle \lim_{n\to\infty}T^n\cdot v=0$ for any $v\in W$, i.e. the sequence $\{T^n\}_{n\geq 0}$ converges to zero in the space of linear operators on $W$ with respect to the \textit{weak topology}.

\begin{prop} \label{tnun}
Let $W$ be a $p$-adic Banach space over $\Q_p$. Suppose  $T\in\End(W^o)$ is topologically nilpotent.  The following are equivalent.
\begin{enumerate}
\item  $T$ is locally analytic;
\item  $T^n(W^o)\subseteq pW^o$ for some $n\geq 1$, i.e. $T^n\cdot v$ converges to $0$ uniformly for all $v\in W^o$;
\item The sequence $\{T^n\}_{n\geq 0}$ converges to zero in $\End(W^o)$ with respect to the $p$-adic topology (equivalently the norm topology).
\end{enumerate}
\end{prop}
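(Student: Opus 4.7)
The plan is to argue around the cycle $(2)\Rightarrow(1)$, $(2)\Leftrightarrow(3)$, and $(1)\Rightarrow(2)$, with the real content concentrated in the last implication. The implication $(2)\Rightarrow(1)$ is immediate by taking $f(X)=X^n$. For $(2)\Leftrightarrow(3)$, I would observe that the $p$-adic topology on $\End(W^o)$ is defined by the filtration $\{p^k\End(W^o)\}_{k\geq 0}$; convergence to zero in this topology is exactly the assertion that $T^n\in p^k\End(W^o)$ eventually for every $k$, so the case $k=1$ yields $(2)$ and iterating $(2)$ gives $T^{kn}(W^o)\subseteq p^kW^o$ for every $k$, yielding the other direction.

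For $(1)\Rightarrow(2)$, my plan is to reduce modulo $p$ and exploit the finiteness of the $\F_p$-algebra generated by the reduced operator. Let $\bar T$ and $\bar f\in\F_p[X]$ denote the reductions; by hypothesis $\bar f(\bar T)=0$ in $\End(W^o/pW^o)$. Factor $\bar f(X)=X^m\bar g(X)$ in $\F_p[X]$ with $\bar g(0)\neq 0$. Since $X^m$ and $\bar g$ are coprime, B\'ezout furnishes $a,b\in\F_p[X]$ with $aX^m+b\bar g=1$. Then $e\defeq b(\bar T)\bar g(\bar T)$ and $e'\defeq a(\bar T)\bar T^m$ satisfy $e+e'=1$ and $ee'=a(\bar T)b(\bar T)\bar f(\bar T)=0$; they are orthogonal idempotents and decompose $W^o/pW^o=\im(e)\oplus\im(e')$.

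On $\im(e)$ the identity $\bar T^m\cdot e=b(\bar T)\bar f(\bar T)=0$ shows that $\bar T^m$ vanishes, while on $\im(e')$ the identity $\bar g(\bar T)\cdot e'=a(\bar T)\bar f(\bar T)=0$ together with $\bar g(0)\neq 0$ shows that $\bar T$ acts invertibly (its inverse is expressible as a polynomial in $\bar T$). Now topological nilpotence enters: for each $v\in W^o$ the condition $T^nv\to 0$ in $W$ forces $\bar T^n\bar v=0$ in $W^o/pW^o$ for $n$ sufficiently large, which is incompatible with invertibility of $\bar T$ on $\im(e')$ unless $\im(e')=0$. Hence $e'=0$, $e=1$, and $\bar T^m=0$ on the whole of $W^o/pW^o$, i.e.\ $T^m(W^o)\subseteq pW^o$.

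The main obstacle is precisely the passage from pointwise decay (a direct consequence of topological nilpotence) to the \emph{uniform} decay demanded by $(2)$. The role of $(1)$ is to confine $\bar T$ to act through the finite-dimensional quotient of $\F_p[X]/(\bar f)$ on $W^o/pW^o$; this finiteness permits the clean idempotent separation into a nilpotent and an invertible part, after which topological nilpotence annihilates the invertible summand. Absent any polynomial identity, one would only know that each orbit eventually enters $pW^o$, with no uniform bound over $W^o$.
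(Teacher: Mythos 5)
Your proof is correct and follows essentially the same route as the paper: the content is in $(1)\Rightarrow(2)$, where both arguments reduce mod $p$, factor $\bar f(X)=X^m\bar g(X)$ with $\bar g(0)\neq 0$, and use topological nilpotence to dispose of the $\bar g$ factor. The only difference is cosmetic: the paper inverts $\bar g(\bar T)$ directly on $W^o/pW^o$ (via the pointwise-convergent geometric series afforded by local nilpotence of $\bar T$), whereas you reach the same conclusion through the B\'ezout idempotent decomposition and the vanishing of the summand on which $\bar T$ is invertible.
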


\begin{proof}
(2) and (3) are clearly equivalently. (2) implies (1) by taking $f(T)=T^n$ in the definition of locally analytic operators. It remains to show that (1) implies (2). Suppose that $f(T)(W^o)\subset pW^o$ for some monic polynomial $f(X)\in\Z_p[X]$. Write $f(X)\mod p =X^k g(X)$ with $g(X)\in\F_p[X]$ and $g(0)\neq 0$. Then $g(T)$ is invertible on $W^o/pW^o$ as $T$ is topologically nilpotent. Hence $T^k=f(T)=0$ viewed as elements in $\End(W^o/p)$.
\end{proof}

We can also generalize this notion to representations of algebras.
\begin{defn}
Suppose $A$ is a ring and $W$ is a $p$-adic Banach space equipped with an $A$-module structure. We say the action of $A$ on $W$ is \textit{locally analytic} if there exists an $A$-stable open and bounded lattice $\mathcal{L}\subseteq W$ such that  the image of $A\to \End(\mathcal{L}/p^n\mathcal{L})$ is finite  for any $n\geq 1$. If this happens, 
 the image of  $A\to \End(\mathcal{L}'/p^n\mathcal{L}')$ is finite for any $n$ and any $A$-stable open and bounded lattice $\mathcal{L}'\subseteq W$.
\end{defn}

In some cases, we only need to consider the image of $A\to \End(\mathcal{L}/p\mathcal{L})$.
\begin{lem}\label{noela}
Suppose $A$ is a Noetherian ring and $W$ is a $p$-adic Banach space equipped with an $A$-module structure.  The action of $A$ on $W$ is locally analytic if there exists an $A$-stable open and bounded lattice $\mathcal{L}\subseteq W$ such that the image of $A\to \End(\mathcal{L}/p\mathcal{L})$ is finite.
\end{lem}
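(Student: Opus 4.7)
The plan is to prove by induction on $n \geq 1$ that the image of $A$ in $\End(\mathcal{L}/p^n\mathcal{L})$ is finite, for the very same lattice $\mathcal{L}$ supplied by the hypothesis. Setting
\[
I_n \defeq \ker\bigl(A \to \End(\mathcal{L}/p^n\mathcal{L})\bigr) = \{a \in A : a\mathcal{L} \subseteq p^n\mathcal{L}\},
\]
the goal becomes showing that the ring $A/I_n$ is finite for every $n \geq 1$. The base case $n=1$ is precisely the hypothesis of the lemma.

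For the inductive step, the short exact sequence
\[
0 \to I_n/I_{n+1} \to A/I_{n+1} \to A/I_n \to 0
\]
reduces matters to showing that $I_n/I_{n+1}$ is finite. Here the key observation is the containment $I_1 \cdot I_n \subseteq I_{n+1}$: for $a \in I_1$ and $b \in I_n$ one computes $(ab)\mathcal{L} = a(b\mathcal{L}) \subseteq a(p^n\mathcal{L}) = p^n(a\mathcal{L}) \subseteq p^{n+1}\mathcal{L}$. Therefore the $A$-action on $I_n/I_{n+1}$ factors through the finite ring $A/I_1$. At this point I would invoke the Noetherian hypothesis: since $A$ is Noetherian, the ideal $I_n$ is finitely generated as an $A$-module, so $I_n/I_{n+1}$ is finitely generated as an $A$-module, and hence as an $A/I_1$-module by the previous sentence. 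A finitely generated module over a finite ring is finite, so $I_n/I_{n+1}$ is finite and the induction closes.

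I do not anticipate any serious obstacle. Noetherianity is used in exactly one spot, to produce finite generation of $I_n$; the multiplicative property $I_1 \cdot I_n \subseteq I_{n+1}$ is forced by the definitions of the ideals $I_n$ as successive annihilators modulo $p^n$. The only mildly non-obvious point is recognizing that on the subquotient $I_n/I_{n+1}$ the natural structure to exploit is the $A/I_1$-module structure rather than the $A$-module structure itself.
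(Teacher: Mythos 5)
Your argument is correct and is essentially the paper's own proof: both reduce to a filtration of $A/I_{n+1}$ whose graded pieces are finitely generated modules over the finite ring $A/I_1$ (the paper uses the containment $I_1^n\subseteq I_n$ and the filtration by $I_1^k/I_1^{k+1}$, while you use $I_1\cdot I_n\subseteq I_{n+1}$ and the pieces $I_n/I_{n+1}$ directly). Noetherianity is used in exactly the same spot in both versions, to guarantee that the relevant ideal is finitely generated so that each graded piece is finite.
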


\begin{proof}
Let $I_n$ be the kernel of $A\to \End(\mathcal{L}/p^n\mathcal{L})$. Clearly $I_1^n\subseteq I_n$. Hence it is enough to show that $A/I_1^n$ is finite. The assumption implies that $A/I_1$ is finite. Since $A$ is Noetherian, we have $I_1^n/I_1^{n+1}$ is also finite. Our claim follows as $A/I_1^n$ is filtered by $I_1^k/I_1^{k+1}$, $k=0,\cdots,n-1$.
\end{proof}

The following Proposition explains our choice of the notion ``locally analytic".
\begin{prop} \label{lapLg}
Suppose $G$ is a compact $p$-adic Lie group and $W$ is a continuous $p$-adic Banach space representation of $G$. Then the following are equivalent.
\begin{enumerate}
\item there exists a $G$-stable open and bounded lattice $\mathcal{L}\subseteq W$ such that $\mathcal{L}/p\mathcal{L}$ is fixed by some open subgroup $G'$ of $G$;
\item the induced action of $\Z_p[G]$ (the group algebra of $G$) on $W$ is locally analytic;
\item the induced action of $\Z_p[[G]]$ (the Iwasawa algebra) on $W$ is locally analytic;
\item $W$ is an analytic representation of some open subgroup $G'$ of $G$. In particular, $W$ is a locally analytic representation of $G$ in the usual sense.
\end{enumerate}
\end{prop}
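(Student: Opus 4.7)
Plan. The cleanest structure is the cycle $(1)\Rightarrow(3)\Rightarrow(2)\Rightarrow(1)$ together with $(1)\Leftrightarrow(4)$. Conditions (1)--(3) are essentially equivalent reformulations that just require juggling Lemma \ref{noela} and the Noetherian property of the Iwasawa algebra, while $(1)\Leftrightarrow(4)$ is the genuine bridge between the lattice-theoretic picture and the notion of an analytic representation.

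For $(1)\Rightarrow(3)$: if an open subgroup $G'\subseteq G$ fixes $\mathcal{L}/p\mathcal{L}$, the induced action of $\Z_p[[G]]$ on $\mathcal{L}/p\mathcal{L}$ factors through $\F_p[G/G']$, which is finite since $G/G'$ is finite. Because $G$ is a compact $p$-adic Lie group, $\Z_p[[G]]$ is Noetherian (Lazard), so Lemma \ref{noela} upgrades this to finiteness of the image in $\End(\mathcal{L}/p^n\mathcal{L})$ for every $n$. The step $(3)\Rightarrow(2)$ is trivial, since the image of $\Z_p[G]$ in $\End(\mathcal{L}/p^n\mathcal{L})$ is a subring of the image of $\Z_p[[G]]$. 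For $(2)\Rightarrow(1)$: the image of $G$ in $\Aut(\mathcal{L}/p\mathcal{L})$ is a subgroup of the finite image of $\Z_p[G]$, so it is a finite group $H$. Since $p\mathcal{L}$ is open in $\mathcal{L}$, the quotient $\mathcal{L}/p\mathcal{L}$ carries the discrete topology, so continuity of the $G$-action makes each vector stabilizer in $\mathcal{L}/p\mathcal{L}$ open. Choosing finitely many vectors that separate the elements of $H$, the intersection of their stabilizers is open and lies in $\Ker(G\to H)$, which therefore is an open subgroup fixing $\mathcal{L}/p\mathcal{L}$.

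For $(1)\Rightarrow(4)$, after shrinking $G'$ if necessary to be uniform pro-$p$ with topological generators $g_1,\dots,g_d$, the condition $(g_i-1)\mathcal{L}\subseteq p\mathcal{L}$ gives inductively $(g_i-1)^n\mathcal{L}\subseteq p^n\mathcal{L}$, and the identity $g_1^{x_1}\cdots g_d^{x_d}=\prod_i\sum_{n\geq 0}\binom{x_i}{n}(g_i-1)^n$ exhibits the action of $G'\cong\Z_p^d$ as an $\End(\mathcal{L})$-valued analytic function in $(x_1,\dots,x_d)$, which is one standard formulation of analyticity on $G'$. For $(4)\Rightarrow(1)$, I would first replace $G'$ by an open normal subgroup of $G$ inside it (the intersection of its finitely many $G$-conjugates, finite by compactness of $G$), then shrink $G'$ further so that analyticity yields a $G'$-stable open bounded lattice $\mathcal{L}_0$ with $(g-1)\mathcal{L}_0\subseteq p\mathcal{L}_0$ for all $g\in G'$. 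The averaged $G$-stable lattice $\mathcal{L}:=\sum_{h\in G/G'}h\mathcal{L}_0$ then inherits the property: for $g\in G'$ and $h\in G$, using normality of $G'$,
$(g-1)h\mathcal{L}_0=h(h^{-1}gh-1)\mathcal{L}_0\subseteq ph\mathcal{L}_0\subseteq p\mathcal{L}$, so $G'$ fixes $\mathcal{L}/p\mathcal{L}$.

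The main obstacle. The only step requiring real input from outside is $(4)\Rightarrow(1)$: one needs that any analytic Banach representation of a uniform pro-$p$ group admits, after passing to a smaller open subgroup, a stable lattice on which every group element acts trivially modulo $p$. This is a standard feature of analytic representations, but invoking it correctly (and arranging the normality and averaging step cleanly) is where the argument does something beyond pure bookkeeping; everything else is lattice manipulation combined with Lemma \ref{noela} and the Noetherian property of $\Z_p[[G]]$.
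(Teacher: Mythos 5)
Your proof is correct and shares the paper's overall architecture (the cycle $(1)\Rightarrow(3)\Rightarrow(2)\Rightarrow(1)$ plus the separate equivalence $(1)\Leftrightarrow(4)$), with the steps $(1)\Rightarrow(3)$ and $(3)\Rightarrow(2)$ identical to the paper's. The genuine divergence is in $(2)\Rightarrow(1)$: the paper reduces to $G$ pro-$p$, notes that $g-1$ is then topologically nilpotent, reruns the argument of Proposition \ref{tnun} to get $(g-1)^{p^n}(\mathcal{L})\subseteq p\mathcal{L}$, and concludes that $G^{p^n}$ --- which contains an open subgroup by Lazard's theory --- fixes $\mathcal{L}/p\mathcal{L}$; you instead observe that the image $H$ of $G$ in $\Aut(\mathcal{L}/p\mathcal{L})$ is finite (it sits inside the finite image of $\Z_p[G]$), that stabilizers of vectors in the discrete module $\mathcal{L}/p\mathcal{L}$ are open by continuity, and that finitely many of them cut out an open subgroup of $\ker(G\to H)$. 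Your argument is more elementary, avoids the nontrivial input that $G^{p^n}$ is open, and works verbatim for any compact group; the paper's version has the advantage of producing the explicit subgroup $G^{p^n}$. For $(1)\Leftrightarrow(4)$ the two arguments are essentially the same (Mahler expansion and Amice's theorem in one direction), except that in $(4)\Rightarrow(1)$ the paper extracts the lattice from the isomorphism $W\cong(W\widehat\otimes_{\Q_p}\sC^{an}(G',\Q_p))^{G'}$ together with the fact that $\sC^{an}(G',\Q_p)^o/p$ is fixed by an open subgroup, while you quote the corresponding statement as a standard fact and add a conjugation-averaging step over $G/G'$ to manufacture a $G$-stable lattice --- a useful point the paper leaves implicit. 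Two details worth pinning down in your write-up: the ``standard feature'' invoked in $(4)\Rightarrow(1)$ should be backed by an actual argument or citation (e.g.\ the displayed isomorphism and the mod-$p$ statement about $\sC^{an}(G',\Q_p)^o$), and in $(1)\Rightarrow(4)$ the series $\sum_{n}\binom{x}{n}(g-1)^n$ converges as a power series in $x$ on all of $\Z_p$ only when the decay of $(g-1)^n$ beats $p^{v_p(n!)}$, which for $p=2$ forces one further shrinking of $G'$ so that $(g-1)\mathcal{L}\subseteq p^2\mathcal{L}$; your ``after shrinking $G'$ if necessary'' covers this, but it deserves a word.
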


\begin{proof}
Note that  $\Z_p[[G]]$ is Noetherian, cf. \cite[Theorem 33.4, Theorem 27.1]{Sch11}. Hence by Lemma \ref{noela},
part (3) follows from (1) by noting that the action of $\Z_p[[G]]$ on $\mathcal{L}/p\mathcal{L}$ factors through $\F_p[G/G']$ for some open normal subgroup of $G$. Part (3) implies (2) because $\Z_p[G]\subseteq\Z_p[[G]]$. To see (2) implies (1), we may assume $G$ is pro-p by replacing it by an open subgroup.  Then $g-1$ is topologically nilpotent on $W$ for any $g\in G$. The same argument of Proposition \ref{tnun} shows that $(g-1)^{p^n}(\mathcal{L})\subseteq p\mathcal{L}$ for some $n>0$ and some open bounded $G$-stable lattice $\mathcal{L}$ and any $g\in G$. Hence $G^{p^n}$ fixes $\mathcal{L}/p\mathcal{L}$. Part (1) follows as $G^{p^n}$ contains an open subgroup of $G$, cf. Theorem 27.1 and Remark 26.9 of \cite{Sch11}. 

It remains to to prove the equivalence between (1) and (4). This is well-known. Part (4) follows from (1) by considering the Mahler coefficients and invoking Amice's theorem, cf. \cite[IV.4]{CD14}. Now assume (4). There is a $G'$-equivariant isomorphism $W\cong (W\widehat\otimes_{\Q_p} \sC^{an}(G',\Q_p))^{G'}$ where $G'$ acts on the right hand side via the right translation action on $\sC^{an}(G',\Q_p)$, the space of $\Q_p$-valued analytic functions on $G'$, cf. \cite[2.1]{Pan20}. Part (1) follows by noting that $\mathscr{C}^{\an}(G',\Q_p)^{o}/p$ is fixed by an open subgroup $G'$ by \cite[Lemma 2.1.2.]{Pan20}.
\end{proof}

\begin{exa} \label{kex}
Suppose $G=\Z_p^k$ and $W$ is a $\Q_p$-Banach space representation of $G$. Then $\Z_p[[G]]\cong \Z_p[[T_1,\ldots,T_k]]$ where $T_i=g_i-1$ and $g_1,\ldots, g_k$ form a basis of $G$. Now suppose the action of $\Z_p[[G]]$ on $W$ is locally analytic. It follows from the previous discussion that there exists $n>0$ such that $T_i^n(W^o)\subseteq pW^o$, $i=1,\ldots,k$, or equivalently $\frac{T_i^n}{p}$ has norm $\leq 1$.  Hence the action of $\Z_p[[T_1,\ldots,T_k]]$ on $W$ can be extended to $\Q_p\langle \frac{T_1^n}{p},\ldots, \frac{T_k^n}{p} ,T_1,\ldots,T_k\rangle$.

Geometrically, the generic fiber of $\Z_p[[T_1,\ldots,T_k]]$ is an open ball and the rigid analytic space associated to $\Q_p\langle \frac{T_1^n}{p},\ldots, \frac{T_k^n}{p} ,T_1,\ldots,T_k\rangle$ corresponds to a \textit{closed} polydisc inside. Roughly speaking, this means the spectrum of $W$ is in a bounded region with radius strictly less than $1$.
\end{exa}

\begin{rem}
Proposition \ref{lapLg} shows that one can extend the notion of locally analytic representations to general topological groups. More precisely, a $\Q_p$-Banach space representation $W$ of a topological group $G$ is called \textit{locally analytic} if the action of $\Z_p[G]$ on $W$ is locally analytic in our sense. When $G=G_{K}$, the local Galois group of a finite extension $K$ of $\Q_p$,  these locally analytic representations show up naturally in the recent development of Sen's theory. For example, one can show that for a locally analytic representation $W$ of $G_{K}$, there is a natural isomorphism
\[W\widehat\otimes_{\Q_p} \overbar{K}\cong (W\widehat\otimes_{\Q_p} \overbar{K})^{H_{K},\Gamma_{K,n}-\mathrm{an}}\widehat\otimes_{K_n} \overbar{K}\]
for some $n>0$. Here $H_K=\Gal(\overbar{K}/K(\mu_{p^\infty}))$, $K_n=K(\mu_{p^n})$, $\Gamma_{K,n}=\Gal(K(\mu_{p^\infty})/K_n)$, and the superscript $H_K$ denotes taking the $H_K$-invariants and ``$\Gamma_{K,n}-\mathrm{an}$'' denotes taking the $\Gamma_{K,n}$-analytic vectors. See \cite[Theorem 3.3.3]{CJER22} for a relative version of this result. We remark that when $G=G_K$, the equivalence between parts (1) and (3) of Lemma \ref{noela} still holds, even though $\Z_p[[G_K]]$ is not Noetherian. This is a consequence of the local class field theory: for any finite extension $L$ of $\Q_p$, the dimension of $\Hom(G_L,\F_p)$ is finite.

\end{rem}

\section{Fake-Hasse invariants} \label{FHi}
In order to study the Hecke action on overconvergent modular forms, we need fake-Hasse invariants and strange formal integral models of the modular curve constructed by Scholze in Chapter 4 of \cite{Sch15}. That the Hecke action is locally analytic will be a formal consequence of the existence of these Hecke-invariant sections.

Our setup is as follows. Let $C=\bC_p$ the $p$-adic completion of $\overbar\Q_p$ with ring of integers $\cO_C$. For a sufficiently small open compact subgroup $K$ of $\GL_2(\A_f)$, we denote by $X^*_{K,C}$ the complete adelic modular curve over $C$ of level $K$ and by $\mathcal{X}^*_{K}$ its associated rigid analytic space. We will always assume $K$ is sufficiently small so that $X^*_{K,C}$ is a variety. If we choose an isomorphism between $C$ and $\bC$, the non-cusp points of $X^*_{K,C}(C)$ are given by the usual double cosets $\GL_2(\Q)\setminus (\bC-\R)\times \GL_2(\A_f)/K$.
On $\mathcal{X}^*_{K}$, we have the usual automorphic line bundle $\omega_{K^pK_p}$. Fix an open compact subgroup $K^p\subseteq\GL_2(\A_f^p)$ contained in the level-$N$-congruence subgroup for some $N\geq 3$ prime to $p$. For a sufficiently small open subgroup $K_p\subseteq \GL_2(\Q_p)$, in the proof of Theorem 4.3.1. of \cite{Sch15}, Scholze constructed 
\begin{itemize}
\item a formal integral model $\mathfrak{X}^*_{K^pK_p}$ of $\mathcal{X}^*_{K^pK_p}$ together with an affine open cover $\mathfrak{V}_{K_p,1},\mathfrak{V}_{K_p,2}$;
\item an ample line bundle $\omega^{\mathrm{int}}_{K^pK_p}$ on $\mathfrak{X}^*_{K^pK_p}$ whose generic fiber is $\omega_{K^pK_p}$;
\end{itemize}
Moreover fix $n\geq 1$. For  a sufficiently small open subgroup $K_p\subseteq \GL_2(\Q_p)$, there are
\begin{itemize}
\item global sections $\bar{s}_{n,1},\bar{s}_{n,2}\in H^0(\mathfrak{X}^*_{K^pK_p},\omega^{\mathrm{int}}_{K^pK_p}/p^n)$ (fake-Hasse invariants) such that ${\mathfrak{V}_{K_p,i}}$ is the locus where $\bar{s}_{n,i}$ is invertible for $i=1,2$. In particular, $\bar{s}_{n,1},\bar{s}_{n,2}$ generate $\omega^{\mathrm{int}}_{K^pK_p}/p^n$. 
\end{itemize}
All $\mathfrak{X}^*_{K^pK_p},\mathfrak{V}_{K_p,1},\mathfrak{V}_{K_p,2}$ and $\omega^{\mathrm{int}}_{K^pK_p}$ are functorial in $K^pK_p$, hence $\GL_2(\A_f^p)$ acts on the tower of $(\mathfrak{X}^*_{K^pK_p},\omega^{\mathrm{int}}_{K^pK_p})$. Both sections $\bar{s}_{n,1},\bar{s}_{n,2}$ are invariant under this action. 

We briefly recall Scholze's construction. Scholze proved that when the level at $p$ varies, the inverse limit $\displaystyle \varprojlim_{K_p\subseteq \GL_2(\Q_p)}\mathcal{X}^*_{K^pK_p}$ exists as a perfectoid space, which will be denoted by $\mathcal{X}^*_{K^p}$. Moreover, there is the so-called Hodge-Tate period morphism 
\[\pi_{\HT}:\mathcal{X}^*_{K^p}\to \Fl\]
defined via the position of the Hodge-Tate filtration on the first cohomology of the universal elliptic curve (on the non-cusp points). Here $\Fl$ ($\cong\mathbb{P}^1$) denotes the associated adic space of the flag variety of $\GL_2/C$. The pull-back of the tautological ample line bundle $\omega_{\Fl}$ on $\Fl$ along $\pi_{\HT}$ is canonically identified with the pull-back of $\omega_{K^pK_p}$ to $\mathcal{X}^*_{K^p}$ (up to a Tate twist). Note that $\Gamma(\Fl,\omega_\Fl)$ has a canonical basis $f_1,f_2$, whose pull-back to $\mathcal{X}^*_{K^p}$ will be denoted by $e_1,e_2$. Let $U_1,U_2\subseteq \mathcal{X}^*_{K^p}$ be the open subsets defined by $\|e_2/e_1\|\leq 1$ and $\|e_1/e_2\|\leq 1$ respectively. Hence $e_i$ is an invertible section on $U_i$ for $i=1,2$. Scholze proved that $U_1$ and $U_2$ are affinoid perfectoid and are the preimages of some affinoid open subsets $V_{K_p,1},V_{K_p,2}$ of $\mathcal{X}^*_{K^pK_p}$ for sufficiently small $K_p$.  Fix $n\geq 1$. For a  sufficiently small subgroup $K_p$ and $i=1,2$, we may find 
\begin{itemize}
\item $s_{n,i}\in \Gamma(V_{K_p,i},\omega_{K^pK_p})$ such that $\displaystyle \|1-\frac{s_{n,i}}{e_i}\|\leq \|p^n\|$;
\item $x_{n,i}\in \Gamma(V_{K_p,i},\cO_{\mathcal{X}^*_{K^pK_p}})$ such that $\displaystyle \|\frac{e_{3-i}}{e_i}-x_{n,i}\|\leq \|p^n\|$.
\end{itemize}
This is possible because the natural map
\[\varinjlim_{K_p}\Gamma(V_{K_p,i},\cO_{\mathcal{X}^*_{K^pK_p}})\to \Gamma(U_{i},\cO_{\mathcal{X}^*_{K^p}})\]
has dense images. The formal model $\mathfrak{X}^*_{K^pK_p}$ is obtained by glueing $\mathfrak{V}_{K_p,1}:=\Spf \cO^+(V_{K_p,1})$ and $\mathfrak{V}_{K_p,2}:=\Spf \cO^+(V_{K_p,2})$ along $\Spf \cO^+(V_{K_p,1}\cap V_{K_p,2})$. The integral line bundle $\omega^{\mathrm{int}}_{K^pK_p}$ is defined by requiring $s_{n,i}$ being invertible on $\mathfrak{V}_i$ for $i=1,2$. This does not depend on $n$ and the choice of $s_{n,i}$. For the  fake-Hasse invariants, observe that $s_{n,1}\mod p^n$ and $s_{n,2}x_{n,2}\mod p^n$ glue a global section $\bar{s}_{n,1}\in H^0(\mathfrak{X}^*_{K^pK_p},\omega^{\mathrm{int}}_{K^pK_p}/p^n)$ by our choice of $s_{n,i},x_{n,i}$. Similarly one can construct $\bar{s}_{n,2}$. We remark that $\bar{s}_{n,1},\bar{s}_{n,2}$ are independent of the choice of $s_{n,i},x_{n,i}$ because $\bar{s}_{n,i}$ may be viewed as $e_i\mod p^n$. Thus $\bar{s}_{n,1},\bar{s}_{n,2}$ are fixed by the action of $\GL_2(\A^p_f)$.

Let $\T=\T_{K^p}=\Z_p[\GL_2(\A_f^p)//K^p]$ be the abstract Hecke algebra of $K^p$-biinvariant compactly supported functions on $\GL_2(\A_f^p)$, where the Haar measure gives $K^p$ measure $1$. Let $K_p$ be a sufficiently small subgroup of $\GL_2(\Q_p)$ so that $\mathfrak{X}^*_{K^pK_p}$ and $\mathfrak{V}_1,\mathfrak{V}_2$ are defined. (We drop some subscripts $K_p$ from the notations.) It follows from the functorial properties of $\mathfrak{V}_1,\mathfrak{V}_2$ that $H^0(\mathfrak{V}_i,(\omega^{\mathrm{int}}_{K^pK_p})^{\otimes k}),i=1,2$ and $k\in \Z$ admits a natural action of $\T$.  Denote by $V_i=V_{K_p,i}\subseteq\mathcal{X}^*_{K^pK_p}$ the generic fiber of $\mathfrak{V}_i$. Then $H^0(V_i,\omega_{K^pK_p}^{\otimes k})$ is a $p$-adic Banach space with unit ball $H^0(\mathfrak{V}_i,(\omega^{\mathrm{int}}_{K^pK_p})^{\otimes k})$. Our main result here is

\begin{thm} \label{mT}
For $i=1,2$ and $k\in\Z$, the Hecke action of $\T$ on $H^0(V_i,\omega_{K^pK_p}^{\otimes k})$ is locally analytic.
\end{thm}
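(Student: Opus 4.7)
The plan is to take $\mathcal{L} := H^0(\mathfrak{V}_i, (\omega_{K^pK_p}^{\mathrm{int}})^{\otimes k})$, the natural open bounded unit ball of $H^0(V_i, \omega_{K^pK_p}^{\otimes k})$, and verify the two conditions for the $\T$-action to be locally analytic: that $\mathcal{L}$ is $\T$-stable, and that the image of $\T\to\End(\mathcal{L}/p^n\mathcal{L})$ is finite for every $n\geq 1$.

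For the $\T$-stability, I would use the $\GL_2(\A_f^p)$-invariance of the fake-Hasse invariants directly. Given $g\in\GL_2(\A_f^p)$, choose an auxiliary open compact $L^p\subseteq K^p\cap g^{-1}K^pg$ and consider the two projections $\pi_1,\pi_2\colon\mathfrak{X}^*_{L^pK_p}\rightrightarrows\mathfrak{X}^*_{K^pK_p}$ of the Hecke correspondence for $g$, where $\pi_2$ incorporates the $g$-action on the tower. By functoriality of the fake-Hasse invariants, $\pi_1^*\bar{s}_{n,i}^{(K^p)}=\bar{s}_{n,i}^{(L^p)}$; by the $\GL_2(\A_f^p)$-invariance of $\bar{s}_{n,i}$, the same equality holds for $\pi_2^*\bar{s}_{n,i}^{(K^p)}$. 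Consequently $\pi_1^{-1}(\mathfrak{V}_i)$ and $\pi_2^{-1}(\mathfrak{V}_i)$ both equal $\mathfrak{V}_{K_p,i}^{(L^p)}$ modulo $p^n$ for every $n$, and so coincide integrally. Thus the Hecke correspondence extends to an integral correspondence between copies of $\mathfrak{V}_i$, and $[K^pgK^p]$ preserves $\mathcal{L}$.

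For the finiteness modulo $p^n$, the same Hecke-invariance of $\bar{s}_{n,i}$ upgrades the trivialization of $(\omega^{\mathrm{int}})^{\otimes k}/p^n$ on $\mathfrak{V}_i$ by $\bar{s}_{n,i}^k$ to a $\T$-equivariant isomorphism $\mathcal{L}/p^n\mathcal{L}\cong\cO^+(V_i)/p^n$. Hence it suffices to prove that the image of $\T$ in $\End(\cO^+(V_i)/p^n)$ is a finite ring.

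I expect this last step to be the main obstacle. My approach is to work at the perfectoid level: via the embedding $\cO^+(V_i)/p^n\hookrightarrow\cO^+(U_i)/p^n$ identifying the left-hand side with the $K^p$-invariants, each Hecke operator $[K^pgK^p]$ arises by averaging coset representatives under the $\GL_2(\A_f^p)$-action on $\cO^+(U_i)/p^n$, which fixes $\bar{s}_{n,i}$ and is strongly constrained by the $\GL_2(\A_f^p)$-invariance of $\pi_{\HT}$ together with the identification $U_i=\pi_{\HT}^{-1}(V_{\Fl,i})$. The goal is to leverage these constraints at the perfectoid level to force the subring of $\End(\cO^+(V_i)/p^n)$ generated by $\T$ to be finite as a set; this is a genuine constraint, since $\cO^+(V_i)/p^n$ is itself not even Noetherian, so the finiteness is not formal and must come from an explicit input of Scholze's construction.
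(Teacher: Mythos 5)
Your setup is fine and matches the paper: taking $\mathcal{L}=H^0(\mathfrak{V}_i,(\omega^{\mathrm{int}}_{K^pK_p})^{\otimes k})$, checking $\T$-stability via functoriality of Scholze's construction, and using the Hecke-equivariant trivialization by $\bar{s}_{n,i}^k$ to reduce to showing that the image of $\T$ in $\End\bigl(H^0(\mathfrak{V}_i,\cO_{\mathfrak{X}^*_{K^pK_p}}/p^n)\bigr)$ is finite. But at exactly the point you flag as ``the main obstacle'' the proposal stops being a proof: you state a \emph{goal} of extracting finiteness from the $\GL_2(\A_f^p)$-equivariance of $\pi_{\HT}$ at the perfectoid level, without any mechanism that actually produces a finite ring. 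Equivariance constraints of that kind do not by themselves bound the image of $\T$ inside the endomorphisms of the huge (non-Noetherian) module $\cO^+(V_i)/p^n$, and you give no candidate finite quotient through which the action would factor. This is a genuine gap, not a routine verification.

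The paper's mechanism, which is what you are missing, is to \emph{globalize} rather than to pass to the perfectoid cover. Since $\mathfrak{V}_i$ is precisely the non-vanishing locus of the Hecke-invariant section $\bar{s}_{n,i}$ of the ample line bundle $\omega^{\mathrm{int}}_{K^pK_p}$, one writes
\[H^0(\mathfrak{V}_i,\cO_{\mathfrak{X}^*_{K^pK_p}}/p^n)=\varinjlim_{\times\bar{s}_{n,i}}H^0\bigl(\mathfrak{X}^*_{K^pK_p},(\omega^{\mathrm{int}}_{K^pK_p})^{\otimes k}/p^n\bigr),\]
a colimit of $\T$-modules with $\T$-equivariant transition maps. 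Finiteness of the image of $\T$ on each term follows, for $k$ large, from ampleness (so $H^0$ mod $p^n$ is $H^0$ reduced mod $p^n$) together with properness of $\mathfrak{X}^*_{K^pK_p}$ (so $H^0$ is a finite $\cO_C$-module) and the fact that the Hecke action is defined over $\Q_p$. One then still needs the kernels of $\T\to\End(H^0(\mathfrak{X}^*_{K^pK_p},(\omega^{\mathrm{int}}_{K^pK_p})^{\otimes k}/p^n))$ to stabilize in $k$, which the paper gets from the short exact sequence
\[0\to (\omega^{\mathrm{int}})^{\otimes k-1}/p^n\xrightarrow{(\bar{s}_{n,1},\bar{s}_{n,2})}\bigl((\omega^{\mathrm{int}})^{\otimes k}/p^n\bigr)^{\oplus 2}\xrightarrow{(\bar{s}_{n,2},-\bar{s}_{n,1})}(\omega^{\mathrm{int}})^{\otimes k+1}/p^n\to 0\]
(exact on global sections for $k$ large), forcing the action in degree $k+1$ to factor through degree $k$. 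Neither the colimit presentation nor the stabilization step appears in your proposal, and without them the finiteness you need is not established.
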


\begin{rem}
We will relate $H^0(V_i,\omega_{K^pK_p}^{\otimes k})$ with classical overconvergent modular forms later in the next section. See the proof of Corollary \ref{HTS}.
\end{rem}

Since $\mathfrak{V}_i$ is affine, $H^0(\mathfrak{V}_i,(\omega^{\mathrm{int}}_{K^pK_p})^{\otimes k})/p^n=H^0(\mathfrak{V}_i,(\omega^{\mathrm{int}}_{K^pK_p})^{\otimes k}/p^n)$. It follows from the construction of $\omega^{\mathrm{int}}_{K^pK_p}$ that if $K'_p$ is an open subgroup of $K_p$, the pull-back map 
\[H^0(\mathfrak{V}_i,(\omega^{\mathrm{int}}_{K^pK_p})^{\otimes k}/p^n)\to H^0(\mathfrak{V}_{K_p',i},(\omega^{\mathrm{int}}_{K^pK'_p})^{\otimes k}/p^n)\]
is injective as $\bar{s}_{1,i}^k$ generates both $(\omega^{\mathrm{int}}_{K^pK_p})^{\otimes k}/p$ and $(\omega^{\mathrm{int}}_{K^pK'_p})^{\otimes k}/p$ on $\mathfrak{V}_i$ and $\mathfrak{V}_{K'_p,i}$ respectively. Hence for a fixed $n$, we are free to replace $K_p$ by a smaller subgroup. In particular, we may assume $\bar{s}_{n,i}$ exists. Note that $\bar{s}_{n,i}$ is an invertible section on $\mathfrak{V}_i$ and commutes with the Hecke actions. There are $\T$-equivariant isomorphisms:
\[H^0(\mathfrak{V}_i,\cO_{\mathfrak{X}^*_{K^pK_p}}/p^n)\stackrel{\times \bar{s}_{n,i}^{k}}{\to}H^0(\mathfrak{V}_i,(\omega^{\mathrm{int}}_{K^pK_p})^{\otimes k}/p^n).\]
Hence $H^0(\mathfrak{V}_i,(\omega^{\mathrm{int}}_{K^pK_p})^{\otimes k})/p^n$ is independent of $k$ as a Hecke module. Thus  it suffices to prove Theorem \ref{mT} for $k=0$ and we  have the following corollary.
\begin{cor} \label{cor1}
The Hecke actions of $\T$ on 
\[(\prod_{k\in\Z} H^0(\mathfrak{V}_i,(\omega^{\mathrm{int}}_{K^pK_p})^{\otimes k}))\otimes_{\Z_p}\Q_p,\,i=1,2\]
 are locally analytic.
\end{cor}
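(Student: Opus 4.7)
The plan is to deduce Corollary \ref{cor1} directly from Theorem \ref{mT} (applied with $k=0$) by packaging the Hecke-equivariant trivializations already established in the paragraph preceding the statement. Set
\[\mathcal{L}_i \;:=\; \prod_{k\in\Z} H^0(\mathfrak{V}_i,(\omega^{\mathrm{int}}_{K^pK_p})^{\otimes k}),\]
which is an open and bounded $\T$-stable $\Z_p$-submodule whose base change to $\Q_p$ is the displayed Banach space. To verify the locally analytic condition, it suffices to show that the image of $\T$ in $\End(\mathcal{L}_i/p^n\mathcal{L}_i)$ is finite for every $n\geq 1$.

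First I would observe the identification $\mathcal{L}_i/p^n\mathcal{L}_i \cong \prod_{k\in\Z} H^0(\mathfrak{V}_i,(\omega^{\mathrm{int}}_{K^pK_p})^{\otimes k}/p^n)$ as $\T$-modules, coming from the fact that $p^n$ acts coordinate-wise on the product. Next, I would invoke the isomorphism $H^0(\mathfrak{V}_i,\cO_{\mathfrak{X}^*_{K^pK_p}}/p^n) \stackrel{\times \bar{s}_{n,i}^k}{\to} H^0(\mathfrak{V}_i,(\omega^{\mathrm{int}}_{K^pK_p})^{\otimes k}/p^n)$, whose $\T$-equivariance is forced by the $\GL_2(\A_f^p)$-invariance of the fake-Hasse invariants. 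Through these isomorphisms the $\T$-action on $\mathcal{L}_i/p^n\mathcal{L}_i$ is identified with the diagonal action of $\T$ on a product of identical copies of $H^0(\mathfrak{V}_i,\cO_{\mathfrak{X}^*_{K^pK_p}}/p^n)$. Consequently the image of $\T$ in $\End(\mathcal{L}_i/p^n\mathcal{L}_i)$ is carried isomorphically onto its image in $\End(H^0(\mathfrak{V}_i,\cO_{\mathfrak{X}^*_{K^pK_p}}/p^n))$.

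By Theorem \ref{mT} applied with $k=0$, this latter image is finite for every $n$, which is exactly what is needed. The essential content is entirely concentrated in Theorem \ref{mT} --- especially in the construction and Hecke-invariance of the fake-Hasse invariants via Scholze's formal model --- and this will be the main obstacle to settle beforehand. The only verification demanded by the corollary itself is that multiplication by $\bar{s}_{n,i}^k$ intertwines the Hecke actions, and this is immediate from $\bar{s}_{n,i}$ being fixed by $\GL_2(\A_f^p)$.
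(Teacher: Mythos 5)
Your proposal is correct and follows essentially the same route as the paper: the $\T$-equivariant isomorphisms $H^0(\mathfrak{V}_i,\cO_{\mathfrak{X}^*_{K^pK_p}}/p^n)\xrightarrow{\times\bar{s}_{n,i}^{k}}H^0(\mathfrak{V}_i,(\omega^{\mathrm{int}}_{K^pK_p})^{\otimes k}/p^n)$ make the mod-$p^n$ reduction of the product lattice a diagonal $\T$-module, reducing everything to Theorem \ref{mT} for $k=0$, which is exactly how the paper deduces the corollary in the paragraph preceding it (including the reduction to sufficiently small $K_p$ where $\bar{s}_{n,i}$ exists, via the injectivity of the pull-back maps).
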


\begin{proof}[Proof of Theorem \ref{mT}]
Fix $n\geq 1$. By definition, we need to show that  the image of 
\[\T\to\End\left(H^0(\mathfrak{V}_i,\cO_{\mathfrak{X}^*_{K^pK_p}}/p^n)\right)\]
is finite. By shrinking $K_p$ if necessary, we may assume $\bar{s}_{n,i}$ exists.
Since ${\mathfrak{V}_i}$ is the locus where $\bar{s}_{n,i}$ is invertible, we may write 
\[H^0(\mathfrak{V}_i,\cO_{\mathfrak{X}^*_{K^pK_p}}/p^n)=\varinjlim_{\times\bar{s}_{n,i}}H^0(\mathfrak{X}^*_{K^pK_p},(\omega^{\mathrm{int}}_{K^pK_p})^{\otimes k}/p^n).\]
Hence it suffices to show the image of 
\[\T\to\End\left(H^0(\mathfrak{X}^*_{K^pK_p},(\omega^{\mathrm{int}}_{K^pK_p})^{\otimes k}/p^n)\right)\]
is finite and the kernel stabilizes when $k$ is sufficiently large. For the finiteness, by the ampleness of $\omega^{\mathrm{int}}_{K^pK_p}$, when  $k$ is sufficiently large, we have 
\[H^0\left(\mathfrak{X}^*_{K^pK_p},(\omega^{\mathrm{int}}_{K^pK_p})^{\otimes k}/p^n\right)=H^0\left(\mathfrak{X}^*_{K^pK_p},(\omega^{\mathrm{int}}_{K^pK_p})^{\otimes k}\right)/p^n.\]
Since $H^0\left(\mathfrak{X}^*_{K^pK_p},(\omega^{\mathrm{int}}_{K^pK_p})^{\otimes k}\right)$ is $p$-torsion free, it is enough to show that the image of 
\[\T\to\End_{\cO_C}\left(H^0(\mathfrak{X}^*_{K^pK_p},(\omega^{\mathrm{int}}_{K^pK_p})^{\otimes k})\right)\subseteq\End_C\left(H^0(\mathcal{X}^*_{K^pK_p},\omega_{K^pK_p}^{\otimes k})\right)\]
is a finite $\Z_p$-module. Indeed, the properness of $\mathfrak{X}^*_{K^pK_p}$ implies that $H^0(\mathfrak{X}^*_{K^pK_p},(\omega^{\mathrm{int}}_{K^pK_p})^{\otimes k})$ is a finite $\cO_C$-module and $H^0(\mathcal{X}^*_{K^pK_p},\omega_{K^pK_p}^{\otimes k})$ is a finite dimensional $C$-vector space. Our claim is clear as $\mathcal{X}^*_{K^pK_p}$, the sheaf $\omega_{K^pK_p}$ and Hecke actions are all defined over $\Q_p$.

To see that the kernel of $\T\to\End\left(H^0(\mathfrak{X}^*_{K^pK_p},(\omega^{\mathrm{int}}_{K^pK_p})^{\otimes k}/p^n)\right)$ stabilizes, consider the exact sequence
\[0\to (\omega^{\mathrm{int}}_{K^pK_p})^{\otimes k-1}/p^n\stackrel{(\bar{s}_{n,1},\bar{s}_{n,2})}{\longrightarrow} (\omega^{\mathrm{int}}_{K^pK_p})^{\otimes k}/p^n\oplus (\omega^{\mathrm{int}}_{K^pK_p})^{\otimes k}/p^n
\stackrel{(\bar{s}_{n,2},-\bar{s}_{n,1})}{\longrightarrow}
(\omega^{\mathrm{int}}_{K^pK_p})^{\otimes k+1}/p^n\to 0.\]
(This essentially comes from the non-split sequence $0\to \cO(-1)\to\cO^{\oplus 2}\to\cO(1)\to 0$ on $\mathbb{P}^1$.)
When $k$ is sufficiently large, taking global sections of this exact sequence remains exact as $\omega^{\mathrm{int}}_{K^pK_p}$ is ample. Thus the Hecke action of $\T$ on $H^0(\mathfrak{X}^*_{K^pK_p},(\omega^{\mathrm{int}}_{K^pK_p})^{\otimes k+1}/p^n)$ factors through $H^0(\mathfrak{X}^*_{K^pK_p},(\omega^{\mathrm{int}}_{K^pK_p})^{\otimes k}/p^n)^{\oplus 2}$, which proves the claim.
\end{proof}

\section{Hodge-Tate-Sen weights}
In this section, we study Galois representations attached to eigenforms in $H^0(V_i,\omega_{K^pK_p}^{\otimes k})$.  Let me introduce some (standard) notation first. For simplicity, from now on we assume $K^p\subset\GL_2(\A_f^p)$ is of the form $\prod_{l\neq p}K_l$. Let $S$ be a finite set of rational primes containing $p$ such that $K_l\cong \GL_2(\Z_l)$ for $l\notin S$. Denote by $\T_S=\Z_p[\GL_2(\A_f^S)//\prod_{l\notin S}K_l]\subset \T$ the subalgebra generated by spherical Hecke operators. Consider the image $\T_{i,1}$ of $\T_S\to \End(H^0(\mathfrak{V}_i,\cO_{\mathfrak{X}^*_{K^pK_p}}/p))$. By Theorem \ref{mT}, this is a finite $\F_p$-algebra. Moreover, by Corollary 5.11 of \cite{Sch15},  there is a continuous $2$-dimensional determinant $D$ of $G_{\Q,S}$ valued in $\T_{i,1}$ in the sense of Chenevier \cite{Che14} satisfying the following property: for any $l\notin S$, the characteristic polynomial of $D(\Frob_l)$ is
\[X^2-l^{-1}T_lX+l^{-1}S_l.\]
Here $G_{\Q,S}$ denotes the Galois group of the maximal extension of $\Q$ unramified outside of $S$ and infinity, $\Frob_l\in G_{\Q,S}$ denotes a geometric Frobenius element at $l$ and $T_l,S_l$ denote the usual Hecke operators
\[[K_l\begin{pmatrix}l&0\\0&1\end{pmatrix}K_l],\,[K_l\begin{pmatrix}l&0\\0&l\end{pmatrix}K_l].\]
Let $\F$ be a finite field so that  all residue fields of $\T_{i,1}$ can be embedded into $\F$. Fix an embedding of $W(\F)[\frac{1}{p}]$ into $\overbar\Q_p$, or equivalently an embedding $\F\to \cO_C/p$. Then $\T_{i,1}\otimes_{\F_p}\F$ acts on $H^0(\mathfrak{V}_i,\cO_{\mathfrak{X}^*_{K^pK_p}}/p)$ and we denote by $\T_{i}$ its image in $ \End(H^0(\mathfrak{V}_i,\cO_{\mathfrak{X}^*_{K^pK_p}}/p))$. Finally, for any maximal ideal $\km$ of $\T_i$, we have a continuous $2$-dimensional determinant $D_{\km}$ of $G_{\Q,S}$ valued in $\T_{i}/\km=\F$. Let $R^{\ps}_{\km}$ be the universal formal $W(\F)$-algebra parametrizing all liftings of $D_{\km}$. This is a noetherian ring. Denote the product over all $\km$ by
\[R^{\ps}=\prod_{\km\in\Spec\T_i}R^{\ps}_{\km}.\]
Now for any $k\in\Z,n>0$, by Corollary 5.1.11 of \cite{Sch15}, there is a lifting of $\prod_{\km\in\Spec\T_i}D_{\km}$ valued in the image of $\T_S\otimes_{\Z_p}W(\F)\to\End(H^0(\mathfrak{X}^*_{K^pK_p},(\omega^{\mathrm{int}}_{K^pK_p})^{\otimes k}/p^n))$. By the universal property, this image receives a map from $R^{\ps}$. Hence we obtain an action of  $R^{\ps}$ on $H^0(V_i,\omega_{K^pK_p}^{\otimes k})$ factoring through the Hecke action. In particular, by Corollary \ref{cor1},

\begin{cor}
The action of $R^{\ps}$ on $(\prod_{k\in\Z} H^0(\mathfrak{V}_i,(\omega^{\mathrm{int}}_{K^pK_p})^{\otimes k}))\otimes_{\Z_p}\Q_p$
 is locally analytic.
\end{cor}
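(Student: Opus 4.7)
The plan is to reduce the claim to the locally analytic Hecke action already established in Corollary \ref{cor1}. My candidate open bounded lattice is
\[\mathcal{L} := \prod_{k\in\Z} H^0(\mathfrak{V}_i,(\omega^{\mathrm{int}}_{K^pK_p})^{\otimes k}),\]
viewed as the unit ball inside the Banach space $\mathcal{L}\otimes_{\Z_p}\Q_p$; it is $\T_S$-stable by functoriality of Scholze's constructions in $K_p$. By Corollary \ref{cor1}, the image of $\T_S$ in $\End(\mathcal{L}/p^n\mathcal{L})$ is finite for every $n\geq 1$. Since $W(\F)/p^n$ is itself a finite ring, the image of $\T_S\otimes_{\Z_p} W(\F)$ in $\End(\mathcal{L}/p^n\mathcal{L})$ is also finite.

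Next I would invoke the construction of the $R^{\ps}$-action. By Corollary 5.1.11 of \cite{Sch15}, for each $k$ and $n$ the lifting of $\prod_{\km} D_{\km}$ produced by the universal property factors the action of $R^{\ps}$ on $H^0(\mathfrak{V}_i,(\omega^{\mathrm{int}}_{K^pK_p})^{\otimes k})/p^n$ through the image of the map $\T_S\otimes W(\F)\to \End(H^0(\mathfrak{V}_i,(\omega^{\mathrm{int}}_{K^pK_p})^{\otimes k})/p^n)$. Taking products over $k$, the $R^{\ps}$-action on $\mathcal{L}/p^n\mathcal{L}$ factors through the image of $\T_S\otimes W(\F)$ in $\End(\mathcal{L}/p^n\mathcal{L})$, which we just saw is finite. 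By the very definition of a locally analytic action, this concludes the argument.

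The main obstacle is simply verifying that the $R^{\ps}$-actions on the various quotients $H^0(\mathfrak{V}_i,(\omega^{\mathrm{int}}_{K^pK_p})^{\otimes k})/p^n$ assemble into a well-defined action on the integral product lattice $\mathcal{L}$, rather than only on each finite-level piece separately. This compatibility as $k$ and $n$ vary follows from the uniqueness clause in the universal property of $R^{\ps}$ together with the naturality of Scholze's deformation construction, so once that compatibility is in hand the proof reduces to the bookkeeping reduction outlined above on top of Corollary \ref{cor1}.
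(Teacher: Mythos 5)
Your proposal is correct and matches the paper's own (very brief) justification: the $R^{\ps}$-action is constructed precisely so that it factors through the image of $\T_S\otimes_{\Z_p}W(\F)$ in the relevant endomorphism rings, so the finiteness mod $p^n$ from Corollary \ref{cor1} transfers immediately. The compatibility in $k$ and $n$ that you flag is indeed handled by the universal property of $R^{\ps}$ and the functoriality of Scholze's construction, exactly as you say.
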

Concretely, since each $R^{\ps}_{\km}$ is a noetherian local formal $W(\F)$-algebra, it can be written as a quotient of $W(\F)[[x_1,\ldots,x_g]]$ for some $g$. As explained in Example \ref{kex}, there exists an integer $n>0$ such that $\frac{x_j^n}{p}$ has norm $\leq 1$ acting on $(\prod_{k\in\Z} H^0(\mathfrak{V}_i,(\omega^{\mathrm{int}}_{K^pK_p})^{\otimes k}))\otimes_{\Z_p}\Q_p$ for any $j=1,\ldots,g$. Therefore, let $E\subset\overbar\Q_p$ be a finite extension of $W(\F)[\frac{1}{p}]$ containing a $n$-th root of $p$ and fix such a root $p^{1/n}\in E$. We can extend the action of $W(\F)[[x_1,\ldots,x_g]]$ to an $E$-linear action of $E\langle\frac{x_1}{p^{1/n}},\ldots,\frac{x_g}{p^{1/n}}\rangle$ on $(\prod_{k\in\Z} H^0(\mathfrak{V}_i,(\omega^{\mathrm{int}}_{K^pK_p})^{\otimes k}))\otimes_{\Z_p}\Q_p$. Recall that geometrically, the generic fiber of $W(\F)[[x_1,\ldots,x_g]]$ is an open ball and $E\langle\frac{x_1}{p^{1/n}},\ldots, \frac{x_g}{p^{1/n}}\rangle$ corresponds to a \textit{closed} polydisc inside. This means the spectrum of $H^0(V_i,\omega_{K^pK_p}^{\otimes k})$ is in a bounded region with radius strictly less than $1$.

We make such a choice for each $\km$. As a consequence,  the action of $R^{\ps}$ can be extended to an action of a topologically finitely generated Banach $E$-algebra. We denote its image in 
\[\End((\prod_{k\in\Z} H^0(\mathfrak{V}_i,(\omega^{\mathrm{int}}_{K^pK_p})^{\otimes k}))\otimes_{\Z_p}\Q_p)\]
by $\mathcal{R}$. There is a natural map $R^{\ps}\to\mathcal{R}$. Hence we have a $2$-dimensional determinant $D_{\mathcal{R}}$ of $G_{\Q,S}$ valued in $\mathcal{R}$ which is continuous with respect to the $p$-adic topology on $\mathcal{R}$. The whole point of showing  that the  Hecke action is locally analytic is to improve the continuity of the determinant on $R^{\ps}$ from the $\mathrm{rad}(R^{\ps})$-adic  topology to a $p$-adic topology.

The Hecke action on $H^0(\mathcal{X}^*_{K^pK_p},\omega_{K^pK_p}^{\otimes k}),k\geq 0$ extends naturally to $\mathcal{R}$. In fact, the image of $\T_S\otimes E$ in $\End(H^0(\mathcal{X}^*_{K^pK_p},\omega_{K^pK_p}^{\otimes k}))$ agrees with the image of  $\mathcal{R}$. In particular, the action of $\mathcal{R}$ on $H^0(\mathcal{X}^*_{K^pK_p},\omega_{K^pK_p}^{\otimes k})$ is semi-simple. 

\begin{lem} \label{dense}
The kernel of 
\[\mathcal{R}\to \End(\prod_{k\geq 0}H^0(\mathcal{X}^*_{K^pK_p},\omega_{K^pK_p}^{\otimes k}))\]
is trivial.
\end{lem}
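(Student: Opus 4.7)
The plan is to use the fake-Hasse invariants to identify each overconvergent form (modulo $p^n$) with a classical integral form of sufficiently large weight, and then invoke the hypothesis that $r$ kills classical forms. Let $r \in \mathcal{R}$ lie in the kernel. After multiplying by a power of $p$, we may assume $r$ preserves the integral lattice $M^o := \prod_{k\in\Z} H^0(\mathfrak{V}_i,(\omega^{\mathrm{int}}_{K^pK_p})^{\otimes k})$, and by $p$-adic separatedness it suffices to show $r\cdot v \equiv 0 \pmod{p^n}$ for every $v \in H^0(\mathfrak{V}_i,(\omega^{\mathrm{int}}_{K^pK_p})^{\otimes k'})$ and every $n$.

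Fix such $n$, $k'$, and $v$; after shrinking $K_p$, we may assume $\bar{s}_{n,i}$ is defined. The direct-limit identity used in the proof of Theorem \ref{mT}, together with the ampleness of $\omega^{\mathrm{int}}_{K^pK_p}$ (which allows lifting from mod $p^n$ to an integral global section in high weight), produces, for some $k$ with $k+k' \gg 0$, a classical form $w \in H^0(\mathfrak{X}^*_{K^pK_p},(\omega^{\mathrm{int}}_{K^pK_p})^{\otimes k+k'})$ such that
\[\bar{s}_{n,i}^k \cdot v \;\equiv\; w|_{\mathfrak{V}_i} \pmod{p^n}.\]

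Since $\bar{s}_{n,i}$ is $\GL_2(\A_f^p)$-invariant, multiplication by $\bar{s}_{n,i}^k$ is Hecke-equivariant modulo $p^n$, and the mod-$p^n$ reduction of $\mathcal{R}^o$ acts through a finite Hecke-theoretic quotient by Theorem \ref{mT}. Consequently
\[\bar{s}_{n,i}^k \cdot (r \cdot v) \;\equiv\; r\cdot (\bar{s}_{n,i}^k \cdot v) \;\equiv\; (r\cdot w)|_{\mathfrak{V}_i} \pmod{p^n}.\]
By hypothesis $r$ annihilates $H^0(\mathcal{X}^*_{K^pK_p},\omega_{K^pK_p}^{\otimes k+k'})$, and for $k+k'$ large the $p$-torsion-free lattice $H^0(\mathfrak{X}^*_{K^pK_p},(\omega^{\mathrm{int}}_{K^pK_p})^{\otimes k+k'})$ embeds there; hence $r\cdot w = 0$. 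As $\bar{s}_{n,i}$ is invertible on $\mathfrak{V}_i$ modulo $p^n$, this yields $r\cdot v \equiv 0 \pmod{p^n}$, as desired.

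The main obstacle is verifying that $r$ commutes with the weight-raising operator $\bar{s}_{n,i}^k$ modulo $p^n$. Although $\mathcal{R}$ contains divided elements such as $x_j^n/p$ which are not literally Hecke operators, the image of each $x_j$ in $\mathcal{R}$ is Hecke-theoretic; the locally-analytic property $x_j^n(M^o) \subseteq p M^o$ (Theorem \ref{mT}) then guarantees that $x_j^n/p$ is defined integrally on $M^o$ and inherits Hecke-equivariance on mod-$p^n$ reductions. To be careful about $p$-adic losses, one may need to work with $\bar{s}_{n+1,i}$ in place of $\bar{s}_{n,i}$, which is harmless.
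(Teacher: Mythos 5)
Your overall strategy --- realize $v$ modulo $p^n$ as $w|_{\mathfrak{V}_i}$ divided by $\bar{s}_{n,i}^{k}$ for a global integral section $w$ of large weight, move $r$ past multiplication by $\bar{s}_{n,i}^{k}$, and use that $r$ kills classical forms --- is the same as the paper's. The gap is precisely in the step you flag as the main obstacle: the commutation of $r\in\mathcal{R}$ with multiplication by $\bar{s}_{n,i}^{k}$ modulo $p^{n}$. Your justification (that the locally analytic property $x_j^n(M^o)\subseteq pM^o$ guarantees $x_j^n/p$ ``inherits Hecke-equivariance on mod-$p^n$ reductions'') conflates preserving a lattice with commuting with the weight-raising map to a given precision. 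Multiplication by (a lift of) $\bar{s}_{n,i}^{k}$ commutes with the honest Hecke operators in $\T_S$ only modulo $p^{n}$; dividing a Hecke element by $p^{m}$ therefore degrades the commutator to modulo $p^{n-m}$. Since a general element of $\mathcal{R}$ of norm $\leq 1$ is only approximated modulo $p^{n}$ by elements of $R^{\ps}\otimes\cO_E$ divided by $p^{m}$ with $m$ unbounded (the generators of $\mathcal{R}$ are $x_j/p^{1/n}$ and arbitrarily high monomials occur in the completion), replacing $\bar{s}_{n,i}$ by $\bar{s}_{n+1,i}$ does not repair this: you would need precision $n+m$ with $m$ depending on $r$ and $n$, and you give no mechanism for producing it.

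The paper supplies exactly this mechanism. Writing $f=\frac{f'}{p^{m}}+pf''$ with $f'\in R^{\ps}\otimes\cO_E$, it lifts $\bar{s}_{1,i}^{l}$ to an integral global section $s_1$ (using ampleness), notes $(g-1)\cdot s_1\in pH^0$ for $g\in\GL_2(\A_f^S)$, and then uses the binomial estimate $(g-1)\cdot s_1^{p^{m}}\in p^{m+1}H^0$ to conclude that multiplication by $s_1^{p^{m}}$ commutes with $f'$ modulo $p^{m+1}$, hence with $\frac{f'}{p^{m}}$ and with $f$ modulo $p$. This $p$-power trick (or, alternatively, shrinking $K_p$ so that a fake-Hasse invariant of precision $n+m$ is available) is the missing ingredient; without it your central congruence $\bar{s}_{n,i}^{k}\cdot(r\cdot v)\equiv r\cdot(\bar{s}_{n,i}^{k}\cdot v)\pmod{p^{n}}$ is unjustified. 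Note also that the paper only needs the statement modulo $p$: it argues by contradiction starting from $f\not\equiv 0$ in $\End$ of the mod-$p$ lattice, which keeps the bookkeeping lighter than running your argument for every $n$.
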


\begin{proof}
This is a standard application of fake Hasse invariants. See the proof of Theorem 4.4.1. of \cite{Sch15}. We give a sketch here. Suppose $f\in\mathcal{R}$ is a non-zero element in the kernel of the above map. We may assume it has norm $\leq 1$ acting on $\prod_{k\in\Z} H^0(\mathfrak{V}_i,(\omega^{\mathrm{int}}_{K^pK_p})^{\otimes k})$ and its image in $\End (\prod_{k\in\Z} H^0(\mathfrak{V}_i,(\omega^{\mathrm{int}}_{K^pK_p})^{\otimes k}/p))$ is non-zero. Now since $R^{\ps}\otimes_{W(\F)} E\to\mathcal{R}$ has dense image, the action of $f$ on $H^0(\mathfrak{V}_i,(\omega^{\mathrm{int}}_{K^pK_p})^{\otimes k}/p)$ commutes with $\bar{s}_{1,i}^{n}$ if $n$ is sufficiently divisible by $p$. Indeed,  since $\omega^{\mathrm{int}}_{K^pK_p}$ is ample, $\bar{s}_{1,i}^{l}$ lifts to a global section $s_1\in H^0(\mathfrak{X}^*_{K^pK_p},(\omega^{\mathrm{int}}_{K^pK_p})^{\otimes l})$ for $l$ large enough. Hence $(g-1)\cdot s_1\in pH^0(\mathfrak{X}^*_{K^pK_p},(\omega^{\mathrm{int}}_{K^pK_p})^{\otimes l})$ for any $g\in\GL_2(\A_f^S)$. Thus $(g-1)\cdot s_1^{p^k} \in p^{k+1}H^0(\mathfrak{X}^*_{K^pK_p},(\omega^{\mathrm{int}}_{K^pK_p})^{\otimes lp^k})$ for $k\geq0$. In particular, 
\[T(s_1^{p^k}x)-s_1^{p^k} T(x)\in p^{k+1} H^0(\mathfrak{V}_i,(\omega^{\mathrm{int}}_{K^pK_p})^{\otimes m+lp^k})\]
 for $T\in\T_S$ and $x\in H^0(\mathfrak{V}_i,(\omega^{\mathrm{int}}_{K^pK_p})^{\otimes m})$. By continuity, this also holds for $T\in R^{\ps}$. If we write $f=\frac{f'}{p^k}+pf''$, where $f'\in R^{\ps}\otimes \cO_E$ and $f''\in \mathcal{R}$ with norm  $\leq 1$ acting on $\prod_{k\in\Z} H^0(\mathfrak{V}_i,(\omega^{\mathrm{int}}_{K^pK_p})^{\otimes k})$. It follows that $\frac{f'}{p^k}$ and $f$ commute with $(\bar{s}_{1,i})^{lp^k}$. This means $f$ acts non-trivially on $H^0(\mathfrak{X}^*_{K^pK_p},(\omega^{\mathrm{int}}_{K^pK_p})^{\otimes k}/p)$ for some sufficiently large $k$ by the same argument as in the proof of Corollary \ref{cor1}. In this case, $H^0(\mathfrak{X}^*_{K^pK_p},(\omega^{\mathrm{int}}_{K^pK_p})^{\otimes k}/p)=H^0(\mathfrak{X}^*_{K^pK_p},(\omega^{\mathrm{int}}_{K^pK_p})^{\otimes k})/p$ by the ampleness of $\omega^{\mathrm{int}}_{K^pK_p}$. But this contradicts our assumption on $f$.
\end{proof}

Recall that there is a determinant $D_{\mathcal{R}}$ of $G_{\Q,S}$ valued in $\mathcal{R}$. Since $\mathcal{R}$ is over a characteristic zero field, one can also view this as a function $T:G_{\Q,S}\to\mathcal{R}$, which behaves like the trace of a two-dimensional representation, i.e. a pseudo-representation. For any non-zero $E$-algebra homomorphism $\lambda:\mathcal{R}\to\overbar\Q_p$, we can associate a two-dimensional semi-simple continuous representation $\rho_\lambda:G_{\Q,S}\to\GL_2(\overbar\Q_p)$, well-defined up to conjugation, whose trace is given by $\lambda\circ T$.  Moreover, if $\lambda$ arises from an eigenform in $H^0(\mathcal{X}^*_{K^pK_p},\omega_{K^pK_p}^{\otimes k})$, then by Faltings's result \cite{Fa87}, $\rho_\lambda|_{G_{\Q_p}}$ has Hodge-Tate weights $0,k-1$. Our convention is that the cyclotomic character has Hodge-Tate weight $-1$. The density result \ref{dense} has the following consequence.

\begin{thm} \label{mtS}
For any $\lambda:\mathcal{R}\to\overbar\Q_p$, one of the Hodge-Tate-Sen weights of $\rho_\lambda|_{G_{\Q_p}}$ is $0$, i.e. $(\rho_\lambda\otimes_{\Q_p}C)^{G_{\Q_p}}\neq 0$.
\end{thm}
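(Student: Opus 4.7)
The plan is to combine Zariski density of classical eigenform points in $\Spec\mathcal{R}$ (essentially provided by Lemma~\ref{dense}) with the fact that ``$0$ is a Sen weight'' is a Zariski-closed condition, detected by the vanishing of the constant term of a Sen polynomial interpolated over the family.

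First I would view $\mathcal{R}$ as an (essentially) affinoid rigid analytic space over $E$: by construction $\mathcal{R}$ is a topologically finitely generated Banach $E$-algebra, a quotient of $E\langle x_1/p^{1/n},\ldots,x_g/p^{1/n}\rangle$ coming from the presentations of the $R^{\ps}_{\km}$, so each $\lambda\colon\mathcal{R}\to\overbar\Q_p$ is a rigid analytic point. Next, each Hecke eigenform $f\in H^0(\mathcal{X}^*_{K^pK_p},\omega_{K^pK_p}^{\otimes k})$ with $k\geq 2$ determines an $E$-algebra homomorphism $\lambda_f\colon\mathcal{R}\to\overbar\Q_p$, and Lemma~\ref{dense} precisely says the intersection of the kernels of these $\lambda_f$ is zero, so the classical points form a Zariski-dense subset of $\Spec\mathcal{R}$. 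By Faltings~\cite{Fa87}, $\rho_{\lambda_f}|_{G_{\Q_p}}$ has Hodge--Tate weights $\{0,k-1\}$, so $0$ is a Sen weight at every classical $\lambda_f$.

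The main step is to construct a monic degree-$2$ Sen polynomial $P_{\mathrm{Sen}}(X)\in\mathcal{R}[X]$ specialising at $\lambda$ to the characteristic polynomial of the Sen operator of $\rho_\lambda|_{G_{\Q_p}}$. Restrict the pseudo-character $T$ to $G_{\Q_p}$; letting $\Gamma=\Gal(\Q_p(\mu_{p^\infty})/\Q_p)$, pick a topological generator $\gamma$ of a sufficiently small open subgroup of $\Gamma$. The $p$-adic continuity of $T$ forces $T(\gamma)-2$ to lie in a neighbourhood of $0$ in $\mathcal{R}$ small enough that the relations $\tr\log\rho(\gamma)=\log\det\rho(\gamma)$ and $\tr\bigl((\log\rho(\gamma))^2\bigr)=(\log\det\rho(\gamma))^2-2\det(\log\rho(\gamma))$ can be made sense of in $\mathcal{R}$, via formal power series in the quantities $T(\gamma^j)$ and $\det\rho(\gamma)$. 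From these one reads off the trace and determinant of the Sen operator $\phi=\log\rho(\gamma)/\log\chi(\gamma)$ (where $\chi$ is the cyclotomic character) as elements of $\mathcal{R}$, and hence $P_{\mathrm{Sen}}(X)$. The condition $P_{\mathrm{Sen}}(0)=0$ is Zariski-closed on $\Spec\mathcal{R}$ and holds on the dense classical locus, so it holds throughout, yielding the theorem.

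The main obstacle is the rigorous construction of $P_{\mathrm{Sen}}$ in a pseudo-character-valued family; this can be handled by Sen's original theory applied in the rigid analytic setting, its refinements by Berger--Colmez and Kisin, or the pseudo-representation variant following Chenevier. The essential input making this possible is precisely the locally analytic Hecke action of Theorem~\ref{mT}: it is what puts $\mathcal{R}$ in the affinoid world and forces $T|_{G_{\Q_p}}$ to be analytic enough on a small open subgroup of $\Gamma$ for the logarithms above to converge with values in $\mathcal{R}$ itself rather than in some larger completion.
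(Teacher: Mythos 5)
Your overall strategy --- Zariski density of classical points via Lemma \ref{dense}, Faltings's computation of Hodge--Tate weights at those points, and the Zariski-closedness of ``the constant term of an interpolated Sen polynomial vanishes'' --- is exactly the skeleton of the paper's proof of Theorem \ref{mtS}. The problem is that the step you flag as ``the main obstacle'' and then dispatch in one sentence is in fact the entire content of the argument, and the explicit construction you propose for it is wrong. You define the Sen operator as $\phi=\log\rho(\gamma)/\log\chi(\gamma)$ and propose to read off its characteristic polynomial from $T(\gamma)$, $T(\gamma^2)$ and $\det\rho(\gamma)$. But the Sen operator lives on $D_{\mathrm{Sen}}(V)=(V\otimes_{\Q_p}C)^{H_{\Q_p}}$ and is only related to $\rho(\gamma)$ through a $C$-semilinear comparison isomorphism; its characteristic polynomial is \emph{not} that of $\log\rho(\gamma)/\log\chi(\gamma)$ acting on $V$. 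A concrete counterexample: take $\rho=\Ind_{G_L}^{G_{\Q_p}}\eta$ with $L/\Q_p$ ramified quadratic and $\eta$ a Lubin--Tate character. This is Hodge--Tate with Sen weights $\{0,1\}$, but for $\gamma$ in a small subgroup of inertia of $L$ the eigenvalues of $\log\rho(\gamma)/\log\chi(\gamma)$ are $\log\eta(\gamma)/\log\chi(\gamma)$ and $\log\eta^{\sigma}(\gamma)/\log\chi(\gamma)$, which are not $\{0,1\}$ and even vary with $\gamma$. (Only the \emph{trace} of the Sen operator is accessible this way, via the determinant character; the constant term of the Sen polynomial, which is what you actually need, is not.)

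The genuine difficulty, which your references to Sen, Berger--Colmez, Kisin and Chenevier do not resolve as stated, is that Sen's theory in families requires an honest finite projective $\mathcal{R}$-module with a continuous $G_{\Q_p}$-action, whereas over $\mathcal{R}$ you only have the pseudo-character $T$, which need not lift to a representation when the reducibility ideal is not principal. This is precisely what the paper's Lemma \ref{uSen} is for: one first replaces $\mathcal{R}$ by its normalization $\mathcal{R}'$ (using excellence and going-up so that no points are lost), then covers the blowup of $\Spm\mathcal{R}'$ along the reducibility ideal by charts on which that ideal becomes principal, so that the standard $2\times 2$ matrix construction from the entries $a(\sigma)$, $d(\sigma)$, $x(\sigma,\tau)$ produces a genuine representation and hence a Sen polynomial on each chart; these glue over the irreducible locus, and normality together with Bartenwerfer's extension theorem for bounded functions pushes the coefficients (which live in $\mathcal{R}'\otimes_{\Q_p}\Q_p(\mu_{p^\infty})$, not in $\mathcal{R}$ --- another point your write-up elides) across the reducible locus. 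A further integrality argument is then needed to deduce $P(0)=0$ on $\mathcal{R}$ from its vanishing at classical points of $\mathcal{R}'$. Without some version of this construction, or an appeal to a theorem that genuinely interpolates Sen polynomials of pseudo-characters over affinoids, your proof does not go through.
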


\begin{proof}
Recall that given a continuous representation of $G_{\Q_p}\to\GL_n(\overbar\Q_p)$, Sen constructs a monic polynomial $P_{\mathrm{Sen},\rho}$ of degree $n$ with coefficients in $\overbar\Q_p\otimes_{\Q_p}\Q_p(\mu_{p^\infty})$. It is called the Sen polynomial of $\rho$ and only depends on the semi-simplification of $\rho$. Its roots are called the Hodge-Tate-Sen weights of $\rho$ (or up to a sign depending on the normalization). Moreover, Sen shows that this polynomial varies analytically in family. See \cite{Sen88,Sen93} and also Th\'eor\`eme 5.1.4. of \cite{BC08}. We are going to apply Sen's theory in our context. 

First, suppose that there exists a continuous Galois representation $\rho_{\mathcal{R}}:G_{\Q,S}\to\GL_2(\mathcal{R})$ whose trace is $T$. Then by Sen's result, we can find a polynomial $P_{\mathrm{Sen},\rho_{\mathcal{R}}}$ with coefficients in $\mathcal{R}\otimes_{\Q_p}\Q_p(\mu_{p^\infty})$, such that for any $\lambda:\mathcal{R}\to\overbar\Q_p$, the Sen polynomial of $\rho_{\lambda}$ is given by $\lambda( P_{\mathrm{Sen},\rho_{\mathcal{R}}})$. By Lemma \ref{dense} and Faltings's result, the constant term of $P_{\mathrm{Sen},\rho_{\mathcal{R}}}$ vanishes as it vanishes after composing with any $\lambda$ arisen from an eigenform  in $H^0(\mathcal{X}^*_{K^pK_p},\omega_{K^pK_p}^{\otimes k})$. (Implicitly we using $\Q_p(\mu_p^\infty)$ is flat over $\Q_p$.) This immediately implies our claim.

In general, we may assume $\mathcal{R}$ is an integral domain. We are going to use the following lemma, whose proof will be given later.
\begin{lem} \label{uSen}
Assume $\mathcal{R}$ is normal. There exists a polynomial $P\in \mathcal{R}\otimes_{\Q_p}\Q_p(\mu_{p^\infty})[X]$  such that  for any $\lambda:\mathcal{R}\to\overbar\Q_p$, the Sen polynomial of $\rho_{\lambda}$ is $\lambda( P)$.
\end{lem}

Let $\mathcal{R}'$ be the normal closure of $\mathcal{R}$ in its fraction field. Note that $\mathcal{R}$ is a quotient of products of $E\langle x_1,\ldots,x_k\rangle$. Hence it is excellent because the Tate algebra $E\langle x_1,\ldots,x_k\rangle$ is excellent by the weak Jacobian condition (\cite[Theorem102]{Mat2}). In particular, $\mathcal{R}$ is a Nagata ring and $\mathcal{R}'$ is a finite $\mathcal{R}$-algebra. Thus $\mathcal{R}'$ is a Banach $E$-algebra. 

Now consider the pseudo-representation $G_{\Q,S}\stackrel{T}{\to} \mathcal{R}\to\mathcal{R}'$. Note that by the going-up property of integral extension, any $\lambda:\mathcal{R}\to\overbar\Q_p$ can be extended to a map $\lambda':\mathcal{R}'\to\overbar\Q_p$ and $\rho_{\lambda}\cong\rho_{\lambda'}$. In particular, it is enough to show that $\rho_{\lambda'}$ has a Hodge-Tate-Sen weight zero for any $\lambda':\mathcal{R}'\to\overbar\Q_p$. Applying the previous lemma to $\mathcal{R}'$, we get a universal Sen polynomial $P$ with coefficients in $\mathcal{R}'\otimes_{\Q_p}\Q_p(\mu_{p^\infty})$. Again it suffices to show the constant term of $P$ vanishes. Write the constant term of $P$ as $\sum_{i=1}^l a_i\otimes b_i$ with $a_i\in\mathcal{R}',b_i\in \Q_p(\mu_p^\infty)$ and $b_i$ are linearly independent over $\Q_p$. If one of $a_i$ is non-zero, say $a_1$, we can find a monic polynomial $Q(X)\in \mathcal{R}[X]$ with constant term $Q(0)\neq 0$ and $Q(a_1)=0$. By Lemma \ref{dense}, there exists a $\lambda:\mathcal{R}\to\overbar\Q_p$ arisen from an eigenform in $H^0(\mathcal{X}^*_{K^pK_p},\omega_{K^pK_p}^{\otimes k})$ and $\lambda(Q(0))\neq 0$. Let $\lambda':\mathcal{R}'\to\overbar\Q_p$ be a map extending $\lambda$. By Faltings's result, $\lambda'(a_1)=0$. But $0=\lambda'(Q(a_1))=\lambda(Q(0))\neq 0$. Contradiction. Thus we prove $P(0)=0$.
\end{proof}

\begin{proof}[Proof of Lemma \ref{uSen}]
First let me recall some standard constructions in the theory of pseudo-representations. Fix a complex conjugation $\sigma^*\in G_{\Q,S}$. Our pseudo-representation $T$ is odd in the sense that $T(\sigma^*)=0$. For any $\sigma,\tau\in G_{\Q,S}$, let
\begin{itemize}
\item $a(\sigma)=\frac{T(\sigma^*\sigma)+T(\sigma)}{2}$;
\item $d(\sigma)=T(\sigma)-a(\sigma)$;
\item $x(\sigma,\tau)=a(\sigma\tau)-a(\sigma)a(\tau)$.
\end{itemize}
We denote by $\mathcal{I}$ the ideal of $\mathcal{R}$ generated by all $x(\sigma,\tau)$. It is called the ideal of reducibility as $\rho_\lambda$ is reducible if and only if $\lambda(\mathcal{I})=0$. If $\mathcal{I}$ is generated by  some $x(\sigma_0,\tau_0)\neq 0$, then
\[\sigma\in G_{\Q,S}\mapsto \begin{pmatrix} a(\sigma) & \frac{x(\sigma,\tau_0)}{x(\sigma_0,\tau_0)}\\ x(\sigma_0,\sigma) & d(\sigma)\end{pmatrix}\]
defines a representation $G_{\Q,S}\to\GL_2(\mathcal{R})$ whose trace is $T$. In this case, our claim follows from Sen's result directly.

In general, $\mathcal{I}$ might not even be principal. Here is a sketch of our strategy. $\mathcal{X}:=\Spm \mathcal{R}$ is viewed as an affinoid rigid analytic variety. Consider the blowup $\tilde{\mathcal{X}}$ of $\mathcal{X}$ along the ideal sheaf defined by $\mathcal{I}$. Then $\mathcal{I}$ becomes an invertible sheaf on $\tilde{\mathcal{X}}$ and we can apply the previous construction and glue a polynomial on $\tilde{\mathcal{X}}$  interpolating  Sen polynomial at each point. Now the normal assumption guarantees that the coefficients of this polynomial actually belong to $\mathcal{R}$. This gives the polynomial we are looking for. Since everything is relatively simple here, the blowup process will be replaced by the explicit construction below. But it seems helpful to keep this blowup picture in mind.

If $\mathcal{I}= 0$, then $a,d$ are characters and our claim is clear. So we may assume $\mathcal{I}\neq 0$ from now on. Let $x_1=x(\sigma_1,\tau_1),\ldots, x_r=x(\sigma_r,\tau_r)$ be a set of non-zero generators of $\mathcal{I}$. Denote by $\mathcal{R}^+$ the unit ball of $\mathcal{R}$ and by $\mathcal{K}$ the fraction field of $\mathcal{R}$. For each $i\in\{1,\ldots,r\}$, we define $\mathcal{R}_i^+$ as  the $p$-adic completion of $\mathcal{R}^+[\frac{x_1}{x_i},\ldots,\frac{x_r}{x_i}]\subset \mathcal{K}$ and $\mathcal{R}_i=\mathcal{R}_i^+[\frac{1}{p}]$. Consider the pseudo-representation $G_{\Q,S}\stackrel{T}{\to} \mathcal{R}\to\mathcal{R}_i$. The ideal of reducibility in this case is generated by $x_i$. Hence we have a polynomial $P_i\in\mathcal{R}_i\otimes_{\Q_p}\Q_p(\mu_{p^\infty})[X]$ interpolating Sen polynomial at each point of $\Spm \mathcal{R}_i$. 

Denote by $\mathcal{Y}_i\subset\Spm\mathcal{R}_i$ the open subset defined by $x_i\neq 0$ and by $\mathcal{X}_i\subset\mathcal{X}$ the open subset defined by $x_i\neq 0,\|x_j\|\leq \|x_i\|,j=1,\ldots,r$. It is easy to see that $\mathcal{Y}_i$ maps isomorphically onto $\mathcal{X}_i$ under the natural map $\pi_i:\Spm \mathcal{R}_i\to \mathcal{X}$. Hence we may view $P_i|_{\mathcal{Y}_i}$ as a polynomial on $\mathcal{X}_i$. Clearly, it interpolates Sen polynomial at each point in $\mathcal{X}_i$. Hence we can glue all $P_i$ and get a polynomial $P$ on $\mathcal{X}':=\mathcal{X}\setminus V(\mathcal{I})$, the locus of irreducible representations. (Here we are using $\mathcal{R}$ is reduced.) Since $\mathcal{R}$ is normal and the coefficients of $P$ are bounded functions, by Bartenwerfer's result \cite[\S 3]{Bar76}, the coefficients of $P$ can be extended to functions defined everywhere on $\mathcal{X}$, i.e. $P\in\mathcal{R}\otimes_{\Q_p}\Q_p(\mu_{p^\infty})[X]$.

We claim this polynomial $P$ interpolates the Sen polynomial at each point in $\mathcal{X}$. By construction, this is true for points in $\mathcal{X}'$. It remains to verify points in $V(\mathcal{I})$. Let $\lambda:\mathcal{R}\to \overbar\Q_p$ be a non-zero map whose kernel contains $\mathcal{I}$. Note that there exists $i\in\{1,\ldots,r\}$ such that $\lambda$ can be extended to a map $\lambda':\mathcal{R}[\frac{x_1}{x_i},\ldots,\frac{x_r}{x_i}]\to \overbar\Q_p$. This is because the usual blowup (in algebraic geometry) of $\Spec \mathcal{R}$ along $\mathcal{I}$ maps surjectively onto $\Spec\mathcal{R}$. Fix an integer $n$ so that $\lambda'(\mathcal{R}^+[\frac{p^nx_1}{x_i},\ldots,\frac{p^nx_r}{x_i}])$ is contained in the ring of integers of $\overbar\Q_p$. We define $\mathcal{R}'^+_i$ as the $p$-adic completion of $\mathcal{R}^+[\frac{p^nx_1}{x_i},\ldots,\frac{p^nx_r}{x_i}]$ and $\mathcal{R}'_i=\mathcal{R}'^+_i[\frac{1}{p}]$. Then $\lambda'$ extends to $\mathcal{R}'_i$ naturally. Again there is a polynomial $P_i'\in\mathcal{R}'_i\otimes_{\Q_p}\Q_p(\mu_{p^\infty})[X]$ interpolating Sen polynomial at each point of $\Spm \mathcal{R}'_i$. It suffices to prove that $P$, considered as an element of $(\mathcal{R}'_i)^{\mathrm{red}}\otimes_{\Q_p}\Q_p(\mu_{p^\infty})[X]$, agrees with $P'_i$. Clearly this is true for points in the irreducible locus $\Spm\mathcal{R}'_i\setminus V(x_i)$. But this also implies points in $V(x_i)$ as $x_i$ is not a zero-divisor in $\mathcal{R}'_i$ by the flatness of $\mathcal{R}'^+_i$ over $\mathcal{R}^+[\frac{p^nx_1}{x_i},\ldots,\frac{p^nx_r}{x_i}]\subset\mathcal{K}$. (Note that $\mathcal{R}'^+_i$ is the $p$-adic completion of the noetherian ring $\mathcal{R}^+[\frac{p^nx_1}{x_i},\ldots,\frac{p^nx_r}{x_i}]$.) This finishes the proof.
\end{proof}

\begin{rem}
In fact, the normal assumption in Lemma \ref{uSen} can be waived here because the local universal deformation ring at $p$ of a pseudo-representation is normal by \cite[Theorem 1.4]{BIP21}.
\end{rem}

\begin{cor} \label{HTS}
The two dimensional semi-simple Galois representation associated to an overconvergent eigenform of weight $k\in\Z$ has Hodge-Tate-Sen weights $0,k-1$.
\end{cor}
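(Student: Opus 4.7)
The plan is to reduce Corollary \ref{HTS} to Theorem \ref{mtS} by realizing a classical overconvergent eigenform as a Hecke eigenvector in some $H^0(V_i, \omega_{K^pK_p}^{\otimes k})$, and then pinning down the second Hodge--Tate--Sen weight via the determinant. The geometric input is that under the Hodge--Tate period morphism $\pi_{\HT}: \mathcal{X}^*_{K^p} \to \Fl$, the ordinary locus of the modular curve maps into a single $B(\Q_p)$-orbit of $\Fl \cong \PP^1$, which lies in exactly one of the two subsets $\{\|e_1/e_2\|\leq 1\}$, $\{\|e_2/e_1\|\leq 1\}$. Shrinking $K_p$ sufficiently, the strict neighborhood of the ordinary locus on which a classical overconvergent eigenform $f$ of weight $k$ is defined sits inside the corresponding $V_{K_p,i}$, so restricting $f$ yields a nonzero Hecke eigenvector in $H^0(V_{K_p,i}, \omega_{K^pK_p}^{\otimes k})$ with the same prime-to-$p$ Hecke eigensystem as $f$.

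The resulting eigensystem $\lambda_0: \T_S \to \overbar{\Q}_p$ has a mod $p$ reduction picking out a maximal ideal $\km$ of $\T_i$ (after enlarging $\F$ if necessary). The universal property of $R^{\ps}_\km$, together with the extension of the Hecke action to $\mathcal{R}$ constructed earlier in this section, then packages $\lambda_0$ into a continuous $E$-algebra homomorphism $\lambda: \mathcal{R} \to \overbar{\Q}_p$, and the associated semi-simple representation $\rho_\lambda: G_{\Q,S} \to \GL_2(\overbar{\Q}_p)$ is precisely the Galois representation attached to $f$. Applying Theorem \ref{mtS} to this $\lambda$ immediately yields that one of the Hodge--Tate--Sen weights of $\rho_\lambda|_{G_{\Q_p}}$ is $0$.

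For the second weight I will use the determinant. The universal pseudo-representation $D$ satisfies $\det \rho_\lambda(\Frob_\ell) = \ell^{-1}\lambda(S_\ell)$ for $\ell \notin S$, and a standard computation (identical for classical and overconvergent eigenforms of weight $k$) gives $\det \rho_\lambda = \chi_{\mathrm{cyc}}^{k-1}\varepsilon$ for some finite-order character $\varepsilon$. Since $\chi_{\mathrm{cyc}}^{k-1}\varepsilon$ has Hodge--Tate--Sen weight $\pm(k-1)$ and Hodge--Tate--Sen weights are additive on determinants, the remaining weight of $\rho_\lambda|_{G_{\Q_p}}$ is forced to be $k-1$, matching the normalization in the statement.

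The main obstacle I expect is the identification step: verifying cleanly that every classical overconvergent eigenform of weight $k$ descends to a section on $V_{K_p,i}$ for some $i$ and sufficiently small $K_p$ without losing its Hecke eigensystem. This requires matching the radius of overconvergence with the geometry of $V_{K_p,i}$ under $\pi_{\HT}$, and choosing $K_p$ small enough in tandem. Once this bridge is in hand, the proof is formal: Theorem \ref{mtS} together with the determinant computation produce the two Hodge--Tate--Sen weights $0$ and $k-1$.
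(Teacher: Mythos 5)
Your overall strategy --- realize the prime-to-$p$ eigensystem of an overconvergent eigenform inside some $H^0(V_{K_p,i},\omega_{K^pK_p}^{\otimes k})$, apply Theorem \ref{mtS} to get the weight $0$, and recover the weight $k-1$ from the determinant --- is the paper's, and the determinant step is fine. The gap is in the identification step, which you correctly flag as the main obstacle but then propose to resolve with a containment that goes the wrong way. In the setup of this paper an overconvergent form of weight $k$ is a section over some $V$ with $\pi_{K_p}^{-1}(V)=\pi_{\HT}^{-1}(V_\infty)$ for a possibly very small neighborhood $V_\infty$ of $\infty\in\Fl$, whereas $V_{K_p,2}$ satisfies $\pi_{K_p}^{-1}(V_{K_p,2})=\pi_{\HT}^{-1}(U)$ for the \emph{fixed} region $U=\{\|e_1/e_2\|\leq 1\}$, which does not shrink as $K_p$ shrinks. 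So for small radius of overconvergence the domain of $f$ sits inside $V_{K_p,2}$, not the other way around: you cannot ``restrict $f$'' to $V_{K_p,i}$, such an $f$ genuinely need not extend to $V_{K_p,2}$, and ``choosing $K_p$ small enough in tandem'' cannot repair this because the target region in $\Fl$ is independent of $K_p$.

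The missing idea is the contraction by $g=\begin{pmatrix} p^n& 0\\ 0 & 1\end{pmatrix}$. For any neighborhood $V_\infty$ of $\infty$ one has $g\cdot U\subseteq V_\infty$ for $n$ large, so after passing to a deeper level $K'$ with $gK'g^{-1}\subseteq K_p$, the isomorphism $\mathcal{X}^*_{K^pK'}\to\mathcal{X}^*_{K^pgK'g^{-1}}$ induced by $g^{-1}$ carries $V_{K',2}$ into the preimage of $V$. Hence the image of $H^0(V,\omega_{K^pK_p}^{\otimes k})$ in $M^\dagger_k(K^p)$ lands in $g\cdot M_2$, where $M_2$ is the image of $\varinjlim_{K_p} H^0(V_{K_p,2},\omega^{\otimes k}_{K^pK_p})$; since $g$ commutes with the prime-to-$p$ Hecke action, the eigensystem of $f$ does occur in some $H^0(V_{K',2},\omega^{\otimes k}_{K^pK'})$, to which Theorem \ref{mtS} applies. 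With this replacement of your ``restriction'' step, the rest of your argument goes through and coincides with the paper's proof.
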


\begin{proof}
We use the (generalized) notion of overconvergent modular forms introduced in \cite[Definition 5.2.5]{Pan20}. See also the  discussion there for its relation with classical overconvergent modular forms. For an open compact subgroup $K_p$ of $\GL_2(\Q_p)$, denote by $\mathcal{V}_{K_p}$ the set of open subsets $V\subseteq \mathcal{X}^*_{K^pK_p}$ such that 
\begin{itemize}
\item $\pi_{K_p}^{-1}(V)=\pi_{\HT}^{-1}(V_\infty)$ for some open neighborhood $V_\infty$ of $\infty\in\mathbb{P}^1=\Fl$.
\end{itemize}
Here $\pi_{K_p}:\mathcal{X}^*_{K^p}\to \mathcal{X}^*_{K^pK_p}$ denotes the projection map and $\pi_{\HT}:\mathcal{X}^*_{K^p}\to \Fl$ is the Hodge-Tate period morphism, cf. the discussion in the beginning of Section \ref{FHi}. For example $V_{K_p,2}$ introduced in Section \ref{FHi} is an element of $\mathcal{V}_{K_p}$ if $K_p$ is sufficiently small. Open sets in $\mathcal{V}_{K_p}$ form a projective system by inclusions. If $K'_p\subseteq K_p$ is an open subgroup, there is a natural map $\mathcal{V}_{K_p}\to \mathcal{V}_{K'_p}$ induced by taking the preimages. The space of overconvergent modular forms of weight $k$  is defined as 
\[M^{\dagger}_k(K^p):=\varinjlim_{K_p\to 1}\varinjlim_{V\in\mathcal{V}_{K_p}} H^0(V,\omega^k_{K^pK_p}).\]
(This is equivalent with \cite[Definition 5.2.5]{Pan20} by \cite[Proposition 5.2.6, Lemma 5.2.9]{Pan20}.) 
Fix a $V\in \mathcal{V}_{K_p}$. The Hecke operators away from $p$ acts on $H^0(V,\omega^k_{K^pK_p})$. An (non-zero) eigenvector of $\T_S$ in $M^{\dagger}_k(K^p)$ is called an overconvergent eigenform of weight $k$. We remark that elements of form $\begin{pmatrix} p^l & 0\\ 0 & 1\end{pmatrix}\in\GL_2(\Q_p)$ act on $M^{\dagger}_k(K^p)$.

Let $M_2\subseteq M^\dagger_k(K^p)$ be the image of $ \varinjlim_{K_p\to 1} H^0(V_{K_p,2},\omega^k_{K^pK_p})\to M^\dagger_k(K^p)$. We claim that
\[M^{\dagger}_k(K^p)=\bigcup_{n\in\Z} \begin{pmatrix} p^n& 0\\ 0 & 1\end{pmatrix}\cdot M_2.\]
This implies the corollary. Indeed, the action of $\begin{pmatrix} p^n& 0\\ 0 & 1\end{pmatrix}$ commutes with the action of the Hecke algebra. Hence our assertion follows from Theorem \ref{mtS} because the (usual) determinant of the Galois representation associated to an overconvergent eigenform of weight $k$ has Hodge-Tate weight $k-1$. To prove the claim, we first note that $\pi_{K_p}^{-1}(V_{K_p,2})=\pi_{\HT}^{-1}(U)$ for some open subset $U$ of $\Fl$ containing $\infty$, which is independent of $K_p$. For any open neighborhood $V_\infty$ of $\infty$,  $ \begin{pmatrix} p^n& 0\\ 0 & 1\end{pmatrix}\cdot U\subseteq V_\infty$ for some $n$. This implies that given $V\in\mathcal{V}_{K_p}$, there exist a sufficiently small open subgroup $K'\subseteq \GL_2(\Q_p)$ and integer $n$ such that 
\begin{itemize}
\item $gK'g^{-1}\subseteq K_p$, where $g= \begin{pmatrix} p^n& 0\\ 0 & 1\end{pmatrix}$;
\item under the isomorphism $\varphi:\mathcal{X}^*_{K^pK'}\stackrel{\sim}{\to}\mathcal{X}^*_{K^pgK'g^{-1}}$ induced by $g^{-1}$, we have $\varphi(V_{K',2})\subseteq \pi^{-1}(V)$ where $\pi:\mathcal{X}^*_{K^pgK'g^{-1}}\to \mathcal{X}^*_{K^pK_p}$ denotes the projection map.
 \end{itemize}
Thus the map $H^0(V,\omega_{K^pK_p}^k)\to M^\dagger_k(K^p)$ factors through $g\cdot H^0(V_{K',2},\omega_{K^pK'}^k)$ and our claim follows.
\end{proof}

\section{A result of Calegari-Emerton}
Matthew Emerton pointed out the following consequence of Corollary \ref{cor1}, which reproves a result of Calegari-Emerton \cite[Theorem 2.2]{CE04} and can be viewed as some evidence towards a question of Buzzard \cite[Question 4.4]{Bu05} asking whether for a fixed level, all Hecke eigenvalues of arbitrary weights lie in a \textit{finite} extension of $\Q_p$. We denote by $\overbar\Z_p$ the ring of integers of $\overbar\Q_p$ and by $\mathfrak{m}$ its maximal ideal.

\begin{thm}
Let $S$ be a finite set of rational primes containing $p$ and $K=\prod_{l}K_l$ be an open compact subgroup of $\GL_2(\A_f)$ with $K_l\cong\GL_2(\Z_l)$ for $l\notin S$. There exists a rational number $\kappa=\kappa(K,p)$ such that for any $\lambda:\T_S\to\overbar\Z_p$ appearing in $H^0(V_i,\omega_{K^pK_p}^{\otimes k})$ and $\lambda':\T_S\to\overbar\Z_p$ appearing in $H^0(V_i,\omega_{K^pK_p}^{\otimes k'})$ for some integers $k,k'$, ($\lambda,\lambda'$ may come from classical forms for example,) if $\lambda\equiv \lambda' \mod \mathfrak{m}$, then 
\[\lambda\equiv \lambda' \mod p^{\kappa}\overbar\Z_p.\]
\end{thm}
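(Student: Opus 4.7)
The plan is to read off $\kappa$ directly from the structural description of $\mathcal{R}$ given in the previous section. Recall that $\T_i$ is a finite $\F_p$-algebra, hence $\Spec\T_i$ is a finite set, and for each maximal ideal $\km\in\Spec\T_i$ the corresponding component $\mathcal{R}_\km$ of $\mathcal{R}$ is a quotient of $E\langle\frac{x_1}{p^{1/n_\km}},\ldots,\frac{x_{g_\km}}{p^{1/n_\km}}\rangle$ for some integer $n_\km>0$ and formal generators $x_1,\ldots,x_{g_\km}$ of the maximal ideal of $R^{\ps}_\km$ above $\F$. Setting $n:=\max_\km n_\km$ (finite), I claim that $\kappa:=\tfrac{1}{n}$ does the job.

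First I would check that both $\lambda,\lambda':\T_S\to\overbar\Z_p$ lift to $\overbar\Z_p$-valued continuous characters $\tilde\lambda,\tilde\lambda'$ of $\mathcal{R}$. Since the $\T_S$-action on $H^0(V_i,\omega^{\otimes k}_{K^pK_p})$ factors through $\mathcal{R}$, and $\mathcal{R}$ is a commutative affinoid $E$-algebra, the eigenvector giving rise to $\lambda$ pins down a maximal ideal of $\mathcal{R}$ with residue field finite over $E$, and the choice of an embedding of this residue field into $\overbar\Q_p$ yields $\tilde\lambda$. The congruence hypothesis $\lambda\equiv\lambda'\bmod\mathfrak{m}$ is equivalent to saying that the reductions of $\tilde\lambda$ and $\tilde\lambda'$ modulo the maximal ideal of $\overbar\Z_p$ define the same maximal ideal $\km\in\Spec\T_i$, so both $\tilde\lambda$ and $\tilde\lambda'$ factor through the \emph{same} component $\mathcal{R}_\km$. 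In particular, since the Banach norm on $\mathcal{R}_\km$ forces $|\tilde\lambda(x_j)|,|\tilde\lambda'(x_j)|\le p^{-1/n_\km}\le p^{-\kappa}$ for every $j$.

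The main estimate is then the following. For any $T\in\T_S$ its image in $\mathcal{R}_\km$ equals the image of an element $F\in R^{\ps}_\km$ (coming from the trace of Frobenius of the universal determinant at the appropriate prime, via Chenevier's theory). Lift $F$ to $W(\F)[[x_1,\ldots,x_{g_\km}]]$ and write $F=\sum_\alpha c_\alpha x^\alpha$ with $c_\alpha\in W(\F)$; evaluation at $\tilde\lambda(x),\tilde\lambda'(x)$ converges because $|\tilde\lambda(x_j)|,|\tilde\lambda'(x_j)|<1$. Using the telescoping identity $z^\alpha-w^\alpha=\sum_j(z_j-w_j)P_{\alpha,j}(z,w)$ with $P_{\alpha,j}\in\Z[z,w]$, together with $|c_\alpha|\le 1$, gives
\[\bigl|\tilde\lambda(T)-\tilde\lambda'(T)\bigr|=\bigl|F(\tilde\lambda(x))-F(\tilde\lambda'(x))\bigr|\le\max_j\bigl|\tilde\lambda(x_j)-\tilde\lambda'(x_j)\bigr|\le p^{-\kappa},\]
which, since $\tilde\lambda|_{\T_S}=\lambda$ and $\tilde\lambda'|_{\T_S}=\lambda'$, is exactly the desired congruence $\lambda(T)\equiv\lambda'(T)\bmod p^\kappa\overbar\Z_p$.

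I do not expect a genuine obstacle: the argument is essentially formal once one has Corollary \ref{cor1}. The most delicate step is probably the lifting from $\T_S$-characters to characters of the full Banach algebra $\mathcal{R}$, and the verification that $T\in\T_S$ really is the image of an element of $R^{\ps}_\km$ --- but the first is a routine application of Tate's Nullstellensatz to the commutative affinoid algebra $\mathcal{R}$ (together with the fact that $\mathcal{R}$ commutes with itself and hence preserves eigenspaces), and the second is built into the construction of the map $R^{\ps}\to\mathcal{R}$ via the universal property of the pseudo-deformation ring. The whole proof is a direct illustration of the slogan of this note: locally analytic Hecke actions confine Hecke eigensystems to a bounded region of radius strictly less than $1$ around each mod-$p$ system.
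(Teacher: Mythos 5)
Your argument is correct in substance and does produce an explicit $\kappa$, but it takes a considerably heavier route than the paper, whose proof is the one--line observation that local analyticity alone suffices. The direct argument runs as follows: by Theorem \ref{mT} the image $\T_{i,1}$ of $\T_S$ in $\End(H^0(\mathfrak{V}_i,\cO_{\mathfrak{X}^*_{K^pK_p}}/p))$ is a finite $\F_p$-algebra, hence Artinian with $\mathrm{rad}(\T_{i,1})^N=0$ for some $N$; scaling an eigenvector $v$ for $\lambda$ into the unit ball $H^0(\mathfrak{V}_i,(\omega^{\mathrm{int}}_{K^pK_p})^{\otimes k})$ with $\|v\|>|p|^{1/2}$, one sees that $\lambda(T)$ modulo $p^{1/2}$ depends only on the image of $T$ in $\T_{i,1}$, that $|\lambda(T)|\le |p|^{1/(2N)}$ whenever that image lies in the radical, and that the induced embeddings of the residue field of the common maximal ideal are rigid (prime-to-$p$ roots of unity lift uniquely); this yields $\kappa=1/(2N)$ with no Galois-theoretic input whatsoever. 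Your detour through $R^{\ps}$ and $\mathcal{R}$ is not wrong, but it imports the determinant machinery only to recover the same confinement statement, and three steps you gloss over need patching. (i) The extension of $\lambda$ to $\tilde\lambda$ on $\mathcal{R}$ is not an application of Tate's Nullstellensatz: a priori $\lambda(\T_S)$ may generate an infinite extension of $E$, so the kernel of $\tilde\lambda$ need not be a maximal ideal. The correct mechanism is the inequality $|\lambda(t)|\le\|t\|_{\mathrm{op}}$ for $t$ in the image of $\T_S\otimes E$ (test $t$ against the scaled eigenvector, then use multiplicativity), which shows $\lambda$ extends continuously to the closure $\mathcal{R}$ and also gives the bound $|\tilde\lambda(x_j)|^{n_\km}=|\tilde\lambda(x_j^{n_\km})|\le|p|$ that you use. (ii) The claim that the image of every $T\in\T_S$ in $\mathcal{R}_\km$ comes from the \emph{integral} ring $R^{\ps}_\km$, so that your coefficients $c_\alpha$ really lie in $W(\F)$, requires the structure of $\T_S$: it is generated over $\Z_p$ by $T_l$ and $S_l^{\pm1}$ for $l\notin S$, and these map to $l\,T(\Frob_l)$ and $(l\,D(\Frob_l))^{\pm1}$, which do lie in $R^{\ps}_\km$ because $l\in\Z_p^\times$ and $D(\Frob_l)\in(R^{\ps}_\km)^\times$. (iii) That $\lambda\equiv\lambda'\bmod\mathfrak{m}$ forces $\tilde\lambda$ and $\tilde\lambda'$ onto the same component $\mathcal{R}_\km$ presupposes that the reductions $\bar\lambda,\bar\lambda'$ factor through $\T_{i,1}$ at all, which is again the eigenvector-normalization argument above. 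With these repairs your proof closes, and its final estimate is the same ``radius strictly less than $1$'' phenomenon the paper is invoking.
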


\begin{proof}
Clear as the action of $\T_S$ on $H^0(V_i,\omega_{K^pK_p}^{\otimes k})$ is locally analytic.
\end{proof}

\section{Hecke action on locally analytic vectors of admissible representations}
In this last section, we provide another example of locally analytic Hecke actions: the case of the Hecke algebra acting on the locally analytic vectors in the completed cohomology. In fact we will prove this result in a more general setup. Suppose 
\begin{itemize}
\item $G$ is a finite-dimensional $p$-adic Lie group;
\item  $W$ is an \textit{admissible} Banach space representation of $G$. Recall that this means that for any open compact subgroup $K$ of $G$ and any open bounded $K$-stable lattice $\mathcal{L}\subseteq W$, the $\F_p$-dimension of $(\mathcal{L}/p\mathcal{L})^{K}$ is finite.
\item $A$ is a ring  and $W$ is equipped with an $A$-module structure which commutes with $G$.
\end{itemize}
For simplicity, we also assume the following:
\begin{itemize}
\item $W^o$ is $A[K]$-stable for some open subgroup $K$ of $G$.
\end{itemize}

One Typical example to keep in mind is that $W$ is Emerton's completed cohomology introduced in \cite{Eme06} for  arithmetic quotients of  symmetric spaces and $A$ is the Hecke algebra. If these arithmetic quotients are Shimura varieties defined over a number field $F$, one can also take $A=\Z_p[G_F]$.

Let $K$ be an open  subgroup of $G$  sufficiently small so that $W^o$ is $K$-stable and it makes sense to talk about analytic functions on it, cf.  Theorem 27.1 of \cite{Sch11}.  We denote by $W^{K-\an}\subseteq W$ the subspace of $K$-analytic vectors. It is a $\Q_p$-Banach space and an $A$-module.

\begin{thm} \label{ocla}
The action of $A$ on $W^{K-\an}$ is locally analytic.
\end{thm}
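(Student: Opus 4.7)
The plan is to construct an $A$-stable open bounded lattice $\cL$ in $W^{K-\an}$ whose reductions $\cL/p^n\cL$ embed, $A$-equivariantly, into modules on which $A$ acts through a finite quotient by the admissibility of $W$. Following \cite[\S 2.1]{Pan20}, I would identify
\[
W^{K-\an}\ \cong\ (W\widehat\otimes_{\Q_p}\sC^{\an}(K,\Q_p))^{K}
\]
with $K$ acting diagonally (the given action on $W$ together with translation on $\sC^{\an}(K,\Q_p)$). Shrinking $K$ so that it admits exponential coordinates $K\cong\Z_p^d$ and $\sC^{\an}(K,\Z_p)\cong\Z_p\langle x_1,\ldots,x_d\rangle$, I take
\[
\cL:=\bigl(W^o\widehat\otimes_{\Z_p}\sC^{\an}(K,\Z_p)\bigr)^{K}\ \subseteq\ W^{K-\an},
\]
the natural integral unit ball of $W^{K-\an}$. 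This $\cL$ is $A$-stable because $A$ commutes with $K$ and acts only on the $W^o$-factor.

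Writing $M:=W^o\widehat\otimes_{\Z_p}\sC^{\an}(K,\Z_p)$ and fixing $n\geq 1$, I would first use $p$-torsion freeness of $M$ to deduce $M^{K}\cap p^nM=p^nM^{K}$, giving an injection $\cL/p^n\cL\hookrightarrow (M/p^nM)^{K}$. The key auxiliary input, proved exactly as in \cite[Lemma 2.1.2]{Pan20}, is that an appropriate open subgroup $K_n\subseteq K$ (e.g.\ $K_n=\exp(p^n\Lie K)$ in a chart) acts trivially on $\sC^{\an}(K,\Z_p)/p^n$: Taylor-expanding the translation action by $g\in K_n$ shows that its difference from the identity is a sum of terms of order $\geq 1$ in the exponential coordinates of $g$, each divisible by $p^n$. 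Consequently, since $K_n$ acts trivially on the second factor of $M/p^nM$,
\[
(M/p^nM)^{K_n}\ =\ (W^o/p^n)^{K_n}\otimes_{\Z/p^n}\sC^{\an}(K,\Z_p)/p^n.
\]

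To conclude, admissibility of $W$ gives that $(W^o/p^n)^{K_n}$ is a finite $\Z/p^n$-module. Since $A$ commutes with $K$ and acts on $M$ only through the first factor, its image in $\End_{\Z/p^n}\bigl((M/p^nM)^{K_n}\bigr)$ coincides with its image in the finite ring $\End_{\Z/p^n}\bigl((W^o/p^n)^{K_n}\bigr)$, and is therefore finite. The $A$-equivariant chain $\cL/p^n\cL\hookrightarrow (M/p^nM)^{K}\subseteq (M/p^nM)^{K_n}$ then forces the image of $A$ in $\End(\cL/p^n\cL)$ to be a quotient of a finite ring, hence finite, for every $n\geq 1$, which is precisely local analyticity of the $A$-action. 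The hard part will be purely foundational, namely setting up the integral tensor-product identification so that $\cL$ is genuinely recognised as the unit ball of $W^{K-\an}$; the mod-$p^n$ analysis above is then routine.
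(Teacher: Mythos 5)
Your proposal is correct and follows essentially the same route as the paper: identify the unit ball of $W^{K-\an}$ with $(W^o\widehat\otimes_{\Z_p}\sC^{\an}(K,\Q_p)^o)^K$, use $p$-torsion freeness to inject $\cL/p^n$ into the $K$-invariants of the mod-$p^n$ tensor product, invoke the fact (as in \cite[Lemma 2.1.2]{Pan20}) that an open subgroup acts trivially on $\sC^{\an}(K,\Q_p)^o/p^n$ to reduce to $(W^o/p^n)^{K'}$, and conclude by admissibility. The only detail worth flagging is that you should note (as the paper does) the flatness of $\sC^{\an}(K,\Q_p)^o/p^n$ over $\Z_p/p^n$ when splitting the invariants across the tensor factors.
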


\begin{proof}
We denote by $\mathscr{C}^{\an}(K,\Q_p)$ the space of $\Q_p$-valued analytic functions on $K$ with  the unit open ball $\mathscr{C}^{\an}(K,\Q_p)^o$ . Fix $n\geq 1$. Then  
\[W^{K-\an,o}=(W^o\widehat\otimes_{\Z_p} \mathscr{C}^{\an}(K,\Q_p)^o)^K,\]
cf. \cite[2.1]{Pan20}.  The completed tensor product is $p$-torsion free. Hence there is an inclusion
\[W^{K-\an,o}/p^n\subseteq (W^o\otimes_{\Z_p} \mathscr{C}^{\an}(K,\Q_p)^o/p^n)^K.\]
Note that $ \mathscr{C}^{\an}(K,\Q_p)^o/p^n$ is fixed by some open subgroup $K'$ of $K$: when $n=1$, this is  \cite[Lemma 2.1.2.]{Pan20}. The same argument works for any $n$. Hence
\[ W^{K-\an,o}/p^n\subseteq (W^o\otimes_{\Z_p} \mathscr{C}^{\an}(K,\Q_p)^o/p^n)^{K'}=(W^o/p^n)^{K'}\otimes_{\Z_p/p^n} \mathscr{C}^{\an}(K,\Q_p)^o/p^n.\]
(Implicitly we use that $\mathscr{C}^{\an}(K,\Q_p)^o/p^n$ is flat over $\Z_p/p^n$.) Note that all the maps are $A$-equivariant. Thus the image of $A\to\End(W^{K-\an,o}/p^n)$ factors through the image of $A\to \End((W^o/p^n)^{K'})$. But $(W^o/p^n)^{K'}$ is finite by the admissibility. By definition, this means that the action of $A$ on $W^{K-\an}$ is locally analytic.
\end{proof}

\begin{rem}
When $W$ is the completed cohomology of modular curves and $A=\Z_p[G_{\Q_p}]$, this result  implies that  Sen's theory can be applied to $W^{K-\an}$. For example it follows that the Sen operator acts on $W^{K-\an}\widehat\otimes_{\Q_p} C$, cf. \cite[Remark 5.1.16]{Pan20}.
\end{rem}

\bibliographystyle{amsalpha}

\bibliography{bib}

\end{document}